\numberwithin{equation}{section}
\newtheorem{lemma}{Lemma}[section]
\newtheorem{theorem}[lemma]{Theorem}
\newtheorem{corollary}[lemma]{Corollary}
\newtheorem{proposition}[lemma]{Proposition}
\newtheorem{conjecture}[lemma]{Conjecture}
\newtheorem{definition}[lemma]{Definition}
\theoremstyle{theorem}
\newtheorem{remark}[lemma]{Remark}
\newtheorem{example}[lemma]{Example}
\theoremstyle{definition}
\newtheorem*{defn*}{Definition}
\theoremstyle{plain}
\newtheorem*{thm*}{Theorem}
\theoremstyle{plain}
\newtheorem*{prop*}{Proposition}
\theoremstyle{plain}
\newtheorem*{conj*}{Conjecture}
\newcommand{\C}{\mathbb{C}}
\newcommand{\Z}{\mathbb{Z}}
\newcommand{\PP}{\mathbb{P}}
\newcommand{\R}{\mathbb{R}}
\newcommand{\codim}{\text{\rm codim}}
\begin{document}

\title[Iterated and mixed discriminants]{\bf  Iterated and mixed discriminants}

\author{Alicia Dickenstein, Sandra di Rocco, Ralph Morrison
}

\address[Alicia Dickenstein]{Department of Mathematics, FCEN, University of Buenos Aires and IMAS (UBA-CONICET),
Ciudad Universitaria, Pab. I, C1428EGA Buenos Aires, Argentina}
\email{alidick@dm.uba.ar}
\address[Sandra Di Rocco] {KTH, Royal Institute of Technology, 10044, Stockholm, Sweden}
\email{dirocco@kth.se}
\address[Ralph Morrison]{Department of Mathematics, Williams College, Williamstown, MA 01267, USA}
\email{10rem@williams.edu}

\date{}

  \thanks{The first author acknowledges the support of
UBACYT 20020170100048BA and
CONICET PIP 11220200100182, Argentina. 
The second author acknowledges supported by KTH and Williams College. 
The third author acknowledges support by ICERM and VR grants NT:2014-4763, NT:2018-03688. 
All three authors acknowledge support by the Knut and Alice Wallenberg foundation.} 

\begin{abstract}\noindent 
Classical work by Salmon and Bromwich classified singular intersections of two quadric surfaces. The basic idea of these results was already pursued by Cayley in connection with tangent intersections of conics in the plane and used by Schäfli for the study of hyperdeterminants. More recently, the problem has been revisited with similar tools in the context of geometric modeling and a generalization to the case of two higher dimensional quadric hypersurfaces was given by Ottaviani.  We propose and study a generalization of this question for systems of Laurent polynomials with support on a fixed point configuration. 

In the non-defective case, the closure of the locus of coefficients giving a non-degenerate multiple root  of the system is defined by a polynomial called the {\em mixed discriminant}.  
We define a related polynomial called the multivariate {\em iterated discriminant}, generalizing the classical Sch\"afli method for hyperdeterminants. 
This iterated discriminant is easier to compute and we prove that it is always divisible by the mixed discriminant. We show that tangent intersections 
can be computed via iteration if and only if the singular locus of a corresponding dual variety has sufficiently high codimension.  
We also study when point configurations corresponding to Segre-Veronese varieties and to the lattice points of planar smooth polygons,
have their iterated discriminant equal to their mixed discriminant.

\end{abstract}

\maketitle
\section{Introduction}

Let $K$ be an algebraically closed field  of characteristic zero and  $A\subset\Z^n$ a finite lattice subset. 
A (Laurent) polynomial $p=\sum_{a\in A} c_a x^{a}\in K[x_1,\ldots,x_n]$ with support on the point configuration $A$ is called an \emph{$A$-polynomial}.

Classical work by Salmon \cite{Sal2} and Bromwich \cite{Bromwich}  classified singular intersections of two quadric surfaces, corresponding 
to the case of two $A$-polynomials where $A$ consists of the lattice points in the dilated  simplex $2 \Delta_3$ in $\R^3$. 
The basic idea of these results was already pursued by Cayley in connection with tangent intersections of conics in $\C^2.$
More recently,
the problem has been revisited with similar tools in~\cite{FNO}, in the context of geometric modeling with  focus on the real case; and  in~\cite{ZM19}, where these
 techniques are used to classify singular Darboux cyclides.  These are surfaces in 3-space that are the projection of the intersection
 of two quadrics in dimension four. 
A generalization to the case of two higher dimensional quadric hypersurfaces is given in \cite{O}.

Consider two space quadrics, given in matrix form by  \begin{equation}\label{eq:quadric}
p_i= \begin{bmatrix} 1&x_1&x_2&x_3 \end{bmatrix} \,  M_i \,  \begin{bmatrix}1\\ x_1\\x_2\\x_3\end{bmatrix},\quad i=0,1.
\end{equation}
 For generic matrices $M_i \in K^{4 \times 4}$, the intersection
$(p_1=p_2=0)$ describes a non-singular curve of degree $4$. The non-generic intersections are described in \cite{Sch, GKZ} in the following way. 
Consider the pencil of quadrics given by  $p_0+t p_1$. 
Using  the Sch\"afli decomposition method, the existence of a tangential intersection can be studied by
considering the zero locus of the following polynomial in the entries of $M_0, M_1$:
\begin{equation}\label{hyper}D_{4 \Delta_1}(\det(M_0+tM_1)),\end{equation}
where $D_{4,\Delta_1}$ is the univariate discriminant of the degree $4$ polynomial 
 $\det(M_0+tM_1)$, considered as a polynomial in $t.$ For generic matrices this is a polynomial of degree $6$ in
its entries (that is, in the coefficients of $p_0, p_1$), and it vanishes whenever $\det(M_0+tM_1)$  does not have four simple
roots.  To classify the different singular intersections, they then studied the Segre characteristics arising from the Jordan
normal form of the matrix $M_0+tM_1$.

\begin{figure}\label{fig:hyper}
\includegraphics[scale=0.35]{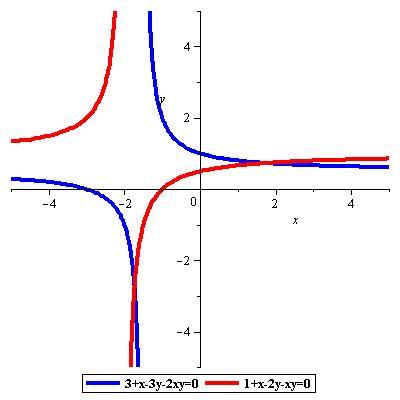}
\qquad \qquad
\includegraphics[scale=0.35]{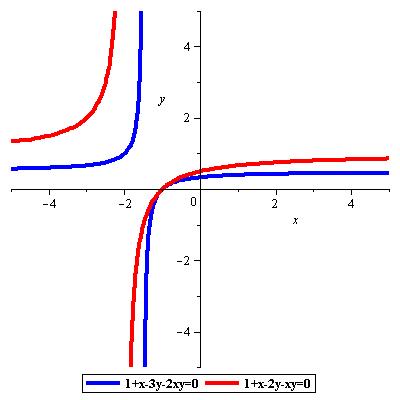}
\caption{Transverse (left) and non-transverse (right)  hyperbolas}
\end{figure}

In this paper, we propose and study a generalization of this approach for any support $A$. 

We consider Equation~\eqref{hyper} to be an {\em iterated process}, as we are computing the discriminant of a discriminant. 
Factorizations of iterated discriminants and resultants for
polynomials of three variables where studied in~\cite{BM}. 
Our aim is to define and study an {\em iterated discriminant} generalizing Sch\"afli's method for hyperdeterminants, and to show when tangent intersections can be computed via iteration. 

The theory of $A$-discriminants was introduced in  \cite{GKZ} and has been extensively studied both from a geometric and a 
computational viewpoint  \cite{DFS,DRRS,E10,GHRS}.  Denote by  $X_A \subset {\mathbb P^{|A|-1}}$ 
  the projective variety defined as the closed image of the monomial embedding given by the $A$-monomials. 
  The dual variety $X_A^\nu \subset{\mathbb P^{|A|-1}}^\nu$ is the closure of the coefficient vectors of the $A$-polynomials $p$ 
  whose zero-locus $(p=0)$ has a singular point $x\in (K^*)^n$ with nonzero coordinates.
   Equivalently, the dual variety is the  closure  of the hyperplane 
sections of  $X_A$ which are singular at a point with nonzero coordinates.
The expected codimension of $X_A^\nu$ is one and when this is the case we say that $A$ is \emph{non-defective}. 
When $A$ is non-defective  the irreducible polynomial $D_A \in {\mathbb Z}[(c_a)_{a \in A}]$ defining 
(up to sign) the dual variety: $X_A^\nu= (D_A=0),$ is called the \emph{$A$-discriminant} ~\cite{GKZ}. 
We will use the notation $D_A((c_a)_{a \in A})=D_A(p)$.

 If $n=1$ and $A=\{0,1,2\}$, then
$p=c_2 x^2 + c_1x+c_0$ and $D_A(p)= c_1^2 - 4 c_0 c_2$ is the classical discriminant of a degree two
polynomial. More generally,  $D_{\{0, 1,\ldots, \delta\}}$ coincides with the classical discriminant of univariate
polynomials of degree $\delta$. This $D_{0, 1,\ldots, \delta}$ is a polynomial of degree $2(\delta-1)$ in the coefficients,
which we denote by $D_{\delta \Delta_1}$. The case of multi-linear polynomials (i.e. tensors)
corresponds to the case in which the convex hull of $A$ equals the product $\Delta_{n_1}\times\ldots\times \Delta_{n_l}$ 
where $\Delta_s$ denotes the unit simplex of dimension $s.$ This multivariate $A$-discriminant is also referred to as the hyperdeterminant of size 
$(n_1+1)\times\ldots\times(n_l+1)$ \cite[Chapter 14]{GKZ}. 
This is a classical object defined originally by Cayley~\cite{Cayley}. 

Note that for a quadratic polynomial $p$ with associated matrix $M$ as in~\eqref{eq:quadric}, that is for $A$ consisting of the lattice points in $2 \Delta_3$,
the existence of a singular point in $(p=0)$ implies that the linear forms given by its
partial derivatives vanish and so $\det(M)=0$. Indeed, $D_A(p) = \det(M)$ (up to an integer factor). This suggests that an iterated 
discriminant should be connected to the notion of discriminant for a system of polynomials. 
This notion is called the \emph{mixed discriminant}~\cite{GKZ,CCDRS,DEK}, which is a natural generalization of the classical $A$-discriminant.

 Given $r+1$ finite configurations $A_0, \ldots, A_r \subset \Z^n$, and a system of $A_i$-polynomials $p_0, \dots, p_r$
 \begin{equation}\label{system} p_{0}=p_{1}=\ldots=p_{r}=0, \quad  p_i=\sum_{a\in A_i} c_{i,a} x^{a},\end{equation}
  we call an isolated solution $x\in(K^*)^n$ a \emph{non-degenerate multiple root} for the system \eqref{system} if the $r+1$ 
gradient vectors $\nabla_xp_i(x), i=0,\ldots,r$ are linearly dependent but any
subset of $r$ of them is linearly independent.  
The associated \emph{mixed discriminantal variety} is the closure of the locus of coefficients for which the system 
has a non-degenerate multiple root.  If this variety is a hypersurface, it is defined by a single irreducible 
polynomial which we call the mixed discriminant, denoted $MD_{A_0,\ldots A_{r}}$.  
If it is not a hypersurface, we call the system \emph{defective} and set $MD_{A_0,\ldots A_{r}}=1$.

Observe that when $r=0$, $MD_{0,A_0} = D_{A_0}$ equals the $A_0$ discriminant. In fact, in the non-defective case, mixed
discriminants are special cases of discriminants of a single polynomial. This was settled in~\cite{GKZ}, but without the hypothesis of non-degeneracy of the
common multiple root and in~\cite{CCDRS} for the case $r+1=n$.
Given $A_0,\ldots A_{r}$, the associated Cayley configuration $C=C(A_0, \dots, A_r) \subset \mathbb{Z}^{n+r}$ is the union
of the lifted configurations $e_i\times A_i\in \mathbb{Z}^{n+r}$ for $i=0,\ldots,r$, where $e_0=0$ and $e_i$ is the standard $i^{th}$ basis vector in
$\mathbb{Z}^r$ for $i\geq 1.$  As sparse discriminants are affine invariants of lattice configurations~\cite{GKZ},
we could equivalently consider $C \subset  \mathbb{Z}^{n+r+1}$, where now $e_0, \dots, e_r$ denote the
canonical basis in $\Z^{r+1}$.
We introduce $r+1$ new variables $\lambda_0,\ldots,\lambda_r$ and encode the initial system by one auxiliary $C$-polynomial:
$$P_\lambda=\lambda_0p_0+\ldots+\lambda_rp_r \in K[\lambda_0, \dots, \lambda_r, x_1, \dots, x_n].$$ 
We will denote both this polynomial and its tuple of coefficients by $P_{\lambda}$ where $\lambda=(\lambda_0,\ldots,\lambda_r).$ 
In Proposition~\ref{prop} we prove that when $C$ is non-defective, 
$MD(A_0,\ldots, A_r)(p_0,\dots, p_r)$ can be computed as $D_C(P_\lambda)$ for any $r$.

 This characterization leads to the following definition of {\em multivariate iterated discriminant} of order $r$.
In the present paper we consider the case when  $A_0=\ldots=A_r=A$ and use the  notation $MD_{r,A}:=MD_{A,\ldots,A}.$ Notice that $D_A(P_\lambda)$ is a 
homogeneous polynomial of degree $\deg(D_A)$ in $\lambda_0,\ldots,\lambda_r.$ 

\begin{definition} \label{def:id}
Given $A \subset \Z^n$ non-defective, denote by $d$ the codimension of the singular locus of the
dual variety $X_A^\nu$. Given $r \ge 0$, 
the \emph{ multivariate iterated discriminant of order $r$} is the polynomial $ID_{r,A}$ on the
coefficients of $(r+1)$ $A$-polynomials $p_0, \dots, p_r$ defined by 
$$\begin{cases} 
 ID_{r,A}(p_0, \dots, p_r):=D_{\delta_A\Delta_{r}}(D_A(P_\lambda)), \, \text{ if } d \ge r,\\
 ID_{r,A}(p_0, \dots, p_r):=0, \, \text{ otherwise. }
\end{cases}$$
\end{definition}
It is worth noting that in the classical case of $r=0,$ all these polynomials coincide by definition: $$MD_{0,A}=ID_{0,A}=D_A, \;\; \text{ and } D_{\delta\Delta_{0}}(D_A(\lambda p_A))=D_A. $$ 
The latter equality is a consequence of the fact that the discriminant (in the variable $\lambda$) of the monomial 
$D_A\lambda^{\delta}$ is the coefficient $D_A$ \cite{Jou}. Moreover, when $A$ consists of the vertices of a simplex, $ID_{r,A}$ 
coincides with the hyperdeterminant Sch\"afli decomposition \cite[Ch. 14]{GKZ}.

\medskip

Our main results give a precise relation between $MD_{r,A}$ and $ID_{r,A}$. 
The advantage of relating $MD_{r,A}$ with $ID_{r,A}$ is that the latter polynomial is much easier to compute.
We show that in the non-defective case $MD_{r,A}$ is always an irreducible factor of $ID_{r,A}$, as a consequence of  biduality  (see Section~\ref{section:iterated}). Therefore,
if $ID_{r,A}(p_0, \dots, p_r) \neq 0$, we get a certificate that
the intersection $(p_0=\dots=p_r)$ is smooth. 
When $A$ is non-defective, we denote by $\textrm{sing}(X_A^\nu)$ the subscheme of $X_A^\nu$ defined by
 the ideal generated by the partial derivatives of $D_A.$ We show that $ID_{r,A}$ can have other irreducible
factors given by the Chow forms $Ch_{Y_k}$ of the higher dimensional irreducible components of the schematic singular locus of the dual variety $X_A^\nu$.  
We recall the notion of Chow forms at the beginning of Section~\ref{section:iterated}.  Theorem~\ref{th:main} and Proposition~\ref{discr} imply the following Theorem.

\begin{thm*} Assume $A \subset \Z^n$ is non-defective and let $r\in\Z$ with $0 \le r \le \dim(X_A).$ 
Then,  the mixed discriminant $MD_{r,A}$  always divides the iterated discriminant $ID_{r,A}$. Moreover:
\begin{enumerate}
\item If ${\rm codim}_{X_A^\nu}(sing(X_A^\nu))> r$, then $ID_{r,A} = MD_{r,A}$.
\item If ${\rm codim}_{X_A^\nu}(sing(X_A^\nu))= r$, then $ID_{r,A} = MD_{r,A}  \prod_{i=k}^\ell Ch_{Y_k}^{\mu_k},$
where $Y_1, \dots, Y_\ell$ are the irreducible components of $sing(X_A^\nu)$ of codimension $r$, with respective multiplicities $\mu_k\ge 2$.
\item If ${\rm codim}_{X_A^\nu}(sing(X_A^\nu)) < r$,  
then  $ID_{r,A}=0.$
\end{enumerate}
\end{thm*}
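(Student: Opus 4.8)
The plan is to reduce everything to a careful analysis of the singular locus of the discriminant hypersurface $X_A^\nu$ and to apply Schl\"afli's trick interpreted geometrically. First I would observe that, since $A$ is non-defective, $X_A^\nu$ is an irreducible hypersurface in $(\mathbb P^{|A|-1})^\nu$ cut out by the $A$-discriminant $D_A$. The polynomial $D_A(P_\lambda)$, viewed in the variables $\lambda_0,\dots,\lambda_r$, is the restriction of $D_A$ to the linear subspace $L = L(p_0,\dots,p_r)$ spanned by the coefficient vectors of $p_0,\dots,p_r$; as noted in the excerpt it is homogeneous of degree $\delta_A=\deg D_A$. The univariate-type discriminant $D_{\delta_A\Delta_r}$ of this restricted form vanishes exactly when the projective hypersurface $\{D_A|_L=0\}\subset \mathbb P(L)\cong\mathbb P^r$ is singular, i.e.\ when the linear space $\mathbb P(L)$ fails to meet $X_A^\nu$ transversally at every point of intersection. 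So $ID_{r,A}(p_0,\dots,p_r)\ne 0$ precisely when $\mathbb P(L)\pitchfork X_A^\nu$. The divisibility $MD_{r,A}\mid ID_{r,A}$ is then exactly the statement already granted in the excerpt (it follows from biduality: a non-degenerate multiple root forces $\mathbb P(L)$ to be tangent to $X_A^\nu$ at a smooth point, hence $ID_{r,A}$ vanishes there), so I would simply cite Theorem~\ref{th:main} and Proposition~\ref{discr} for it and concentrate on the three cases.

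The key step is to decompose the locus where $\mathbb P(L)$ meets $X_A^\nu$ non-transversally into two geometrically distinct pieces: (a) $\mathbb P(L)$ is tangent to $X_A^\nu$ at a \emph{smooth} point of $X_A^\nu$, which by biduality is governed by the dual variety $(X_A^\nu)^\nu=X_A$ and produces the factor $MD_{r,A}$; and (b) $\mathbb P(L)$ simply \emph{meets} $\mathrm{sing}(X_A^\nu)$, which can force non-transversality regardless of how generic the linear space is in other directions. I would make (a) precise via the conormal variety of $X_A^\nu$, whose image under the second projection is $X_A$ again by biduality, so that the closure of the set of $L$ from (a) is the mixed discriminantal variety, defined by $MD_{r,A}$. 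For (b): a linear $\mathbb P^r$ meeting a subscheme $Z\subset \mathbb P^N$ of dimension $s$ is a generic (codimension-$0$) event when $s\ge N-r$, a codimension-one condition when $s=N-r-1$, and a positive-codimension ("never, generically") condition when $s<N-r-1$; translating $\dim$ into $\mathrm{codim}_{X_A^\nu}$ gives the trichotomy $\mathrm{codim}_{X_A^\nu}(\mathrm{sing}(X_A^\nu))<r$, $=r$, $>r$ in the three cases. In case (3), a generic $\mathbb P^r$ already hits $\mathrm{sing}(X_A^\nu)$, so $\{D_A|_L=0\}$ is always singular and $ID_{r,A}\equiv 0$, matching the convention in Definition~\ref{def:id} (indeed this is the case $d<r+1$ there). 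In case (1), $\mathbb P(L)$ generically avoids $\mathrm{sing}(X_A^\nu)$ and so the only source of non-transversality is (a); hence $ID_{r,A}$ and $MD_{r,A}$ have the same zero set, and since both are (up to the known factorization) the defining polynomials of that set, equality follows after a degree/irreducibility argument. In case (2), the extra vanishing comes from $L$ meeting a maximal-dimensional component $Y_k$ of $\mathrm{sing}(X_A^\nu)$; the locus of $\mathbb P^r$'s meeting a fixed irreducible $Y_k$ of dimension $r$ in $\mathbb P^N$ is an irreducible hypersurface in the Grassmannian/parameter space whose defining equation is precisely the Chow form $Ch_{Y_k}$, and collecting these over all maximal components gives the product $\prod_k Ch_{Y_k}^{\mu_k}$, with multiplicities $\mu_k$ read off from the order of vanishing of $D_{\delta_A\Delta_r}$ along that component.

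To pin down the multiplicities $\mu_k$ and to prove that there are \emph{no other} factors, I would compute the degree of $ID_{r,A}$ in the coefficients of the $p_i$ on both sides. The left side has a known degree coming from the formula for $\deg D_{\delta\Delta_r}$ applied to a degree-$\delta_A$ form in $r+1$ variables (namely $(r+1)(\delta_A-1)^r$), while on the right side $\deg MD_{r,A}$ is computed — or already recorded — via the Cayley trick and $\deg Ch_{Y_k}$ equals $(r+1)\cdot \deg Y_k$ (the degree of $Y_k\subset\mathbb P^N$ times the number of $\mathbb P^1$-directions, suitably interpreted in the parameter space of $(p_0,\dots,p_r)$). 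Matching these degrees, together with the fact that the zero set of $ID_{r,A}$ is contained in the union of the mixed discriminantal variety and the "$L$ meets $\mathrm{sing}(X_A^\nu)$" locus, forces the factorization to be exactly as stated with those specific $\mu_k$; in particular when $\mathrm{codim}>r$ there is no room for any Chow-form factor and we get $ID_{r,A}=MD_{r,A}$ on the nose (no stray constant, since both are primitive integer polynomials normalized compatibly).

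I expect the main obstacle to be the bookkeeping in case (2): showing that the Chow form $Ch_{Y_k}$ of a maximal-dimensional component of the \emph{schematic} singular locus appears with the correct multiplicity, rather than merely as a set-theoretic factor. This requires understanding how the discriminant of the restricted form $D_A|_L$ degenerates as $L$ approaches tangency to $Y_k$ — essentially a local computation of the vanishing order of $D_{\delta_A\Delta_r}$ along the Chow divisor — and reconciling it with the scheme structure on $\mathrm{sing}(X_A^\nu)$ defined by the partial derivatives of $D_A$. The subtlety is that a generic point $L$ of the Chow divisor meets $Y_k$ in a single point where $X_A^\nu$ may have various local types of singularity, and the contribution to the discriminant depends on the local intersection multiplicity of $L$ with the singular scheme there; controlling this uniformly is where the real work lies. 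Everything else (the biduality input, the trichotomy on dimensions, the degree count) is either quoted from the excerpt or a standard Grassmannian incidence-variety computation.
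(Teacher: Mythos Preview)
Your overall geometric setup matches the paper's: decompose the failure of transversality of $\mathbb P(L)$ with $X_A^\nu$ into tangency at a smooth point (controlled by biduality, yielding the $MD_{r,A}$ factor) and incidence with the singular locus (yielding Chow-form factors), then read off the trichotomy from a dimension count on the locus of $r$-planes meeting $\mathrm{sing}(X_A^\nu)$. The paper packages this through an explicit incidence scheme $\Sigma$ and its projection $\pi$, but the content is the same.

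The genuine gap is your multiplicity-one argument. In case~(1) you know set-theoretically that $V(ID_{r,A})=V(MD_{r,A})$, hence $ID_{r,A}=c\cdot MD_{r,A}^m$ for some $m\ge 1$, and you propose to force $m=1$ by matching degrees, appealing to an independent computation of $\deg MD_{r,A}$ ``via the Cayley trick''. But no such independent computation is available: the Cayley trick only identifies $MD_{r,A}$ with $D_C$ for $C=\Delta_r\times A$, and the GKZ degree formula for $D_C$ bears no a~priori relation to $(r{+}1)\delta_A(\delta_A{-}1)^r$. Indeed, Section~\ref{section:comparing} is entirely devoted to comparing these two numbers for specific families and showing they almost always \emph{differ}; when they agree, that is a \emph{consequence} of case~(1), not an input. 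Your degree match is therefore circular. The paper establishes $m=1$ by a direct generic-smoothness argument instead: it chooses a smooth point $\xi\in X_A^\nu$ where the Gauss map $\gamma=\nabla D_A$ has full rank $n$ (equivalently $\mathrm{rank}\,\mathrm{Hess}(D_A)(\xi)=n$), takes $(p_0,\dots,p_r)$ spanning an $r$-plane through $\xi$ inside $T_{X_A^\nu,\xi}$, and checks by an explicit matrix computation that $d\pi$ has maximal rank at the corresponding point of $\Sigma$. This is the missing ingredient, and the same Hessian argument is what fixes the exponent of $MD_{r,A}$ at $1$ in case~(2) as well.

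Two smaller points. First, the theorem only claims $\mu_k\ge 1$ in case~(2); the paper explicitly leaves the exact values open (Remark~\ref{rem:muk}), so your plan to ``pin them down'' by degree matching both overshoots the target and, as above, presupposes knowing $\deg MD_{r,A}$. Second, your degree for $ID_{r,A}$ drops a factor of $\delta_A$: the discriminant $D_{\delta\Delta_r}$ has degree $(r{+}1)(\delta{-}1)^r$ in the coefficients of a degree-$\delta$ form, but each such coefficient has degree $\delta_A$ in the $c_{ij}$, so $\deg ID_{r,A}=(r{+}1)\delta_A(\delta_A{-}1)^r$ as in Proposition~\ref{discr}.
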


The paper is organized as follows. In Section \ref{section:motivating} we present some examples that motivate the theory of iterated discriminants.  
In Section \ref{section:mixed} we present material on 
mixed discriminants and Cayley configurations. 

In Section \ref{section:iterated} we develop the theory of iterated discriminants and prove our main results
Theorem~\ref{th:main} and Proposition~\ref{discr}.
 We prove in Proposition~\ref{prop:muk} that the multiplicities $\mu_k$ in Theorem~\ref{th:main} are at least two. Very few is known 
in general about these multiplicities, except for the homogeneous case of three variables studied in~\cite{BM} and the general results in~\cite{Lazard}. 
Based on this evidence and some examples we computed, we state Conjecture~\ref{conj:muk}. The difficulty in
determining these multiplicities relies in the fact that for general point configurations $A$, a complete description of the components of
 the singular locus of the dual varieties $X_A$  and their codimensions is out of reach for the moment. By a result of Katz (Prop.~ 3.4 in~\cite{Katz}) it is expected that the codimension one
components correspond to the double point locus (the closure of those hypersurfaces with two different non-degenerate
singular points) and the cusp locus (the closure of those hypersurfaces having a single degenerate singular point with an $A_2$-singularity).  
The case of hyperdeterminants has been exhaustively described in Theorem 0.5 in~\cite{WZ}, where it is shown that in the non-defective case only one irreducible component 
with codimension one can exist, or there could be several irreducible components of codimension one of both types.  Already the univariate {\sl sparse} case poses some challenges~\cite{DHT}. 
Even the particular case of the existence of a cusp component with codimension one when $D_A$ corresponds to the mixed discriminant of two planar configurations, recently studied in~\cite{Ni}, is not trivial.
A general approach to
describe the irreducible components (and much more information) via the computation of tropical fans and characteristic classes is developed in~\cite{E18}.  

In Section~\ref{section:comparing} we ask more broadly when mixed and iterated discriminants are equal, for products of scaled simplices, that is, when $X_A$ is a Segre-Veronese variety. 
The case of Segre varieties was solved in~\cite{WZ}, via a careful study of the
singularities of hyperdeterminant varieties.  
 As a corollary of our results, we show in Proposition~\ref{proposition:r1d2}  that the iterated  method to characterize singular complete intersections 
for $r+1$ hypersurfaces of the same degree $d>1$ in $\mathbb P^n$ gives the corresponding mixed discriminant if and only if
$r=1$ and $d=2$ (the case of two quadric hypersurfaces already found in~\cite[Theorem 8.2]{O}).

Our 
Conjecture~\ref{conjecture:main} is the following, with notation as in Section \ref{section:comparing}:
\begin{conj*}
 The equality $\deg(ID_{r,A_{\ell,d,k}})=\deg(MD_{r,A_{\ell,d,k}})$ holds if and only if $$\mathbb{P}^{r}(1)\times\mathbb{P}^{k_1}(d_1)
\times\cdots\times \mathbb{P}^{k_\ell}(d_\ell)$$ is of one of the following cases:
 \begin{enumerate}
 \item $\PP^r\times\PP^m\times\PP^m,\, m\geq 1,\, r=1,2$,
 \item $(\PP^1)^4$,
 \item $\PP^1\times\PP^n(2)$.
 \end{enumerate}
 \end{conj*}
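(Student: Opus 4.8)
The plan is to turn the degree equality into a statement about the singular locus of the $A$-discriminant, and then to resolve that statement one Segre--Veronese family at a time.

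\textbf{Reduction.} Write $A=A_{\ell,d,k}$ and $X=X_A$. By Theorem~\ref{th:main}, $MD_{r,A}$ divides $ID_{r,A}$, with quotient a unit, a product $\prod_k Ch_{Y_k}^{\mu_k}$ of Chow forms of positive--dimensional varieties, or forcing $ID_{r,A}=0$, according as $\codim_{X_A^\nu}({\rm sing}(X_A^\nu))$ is $>r$, $=r$, or $<r$. Since $MD_{r,A}$ is irreducible---or is $\equiv 1$, by Proposition~\ref{prop}, exactly when the Cayley configuration $C$ with $X_C=\PP^r(1)\times X$ is defective---and divides $ID_{r,A}$, we get
\[
\deg ID_{r,A}=\deg MD_{r,A}\ \iff\ ID_{r,A}=\pm\,MD_{r,A}\ \iff\ \codim_{X_A^\nu}\!\big({\rm sing}(X_A^\nu)\big)>r.
\]
Thus Conjecture~\ref{conjecture:main} is equivalent to the assertion that, for a non-defective Segre--Veronese $A$ and $1\le r\le\dim X$, one has $\codim_{X_A^\nu}({\rm sing}(X_A^\nu))>r$ precisely when $\PP^r(1)\times X$ appears in the list. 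Since ``$>r$'' for some $r\ge1$ forces $c(A):=\codim_{X_A^\nu}({\rm sing}(X_A^\nu))\ge2$, it suffices to compute $c(A)$ whenever $c(A)\ge2$ and then to read off the range $1\le r<c(A)$, intersected with $r\le\dim X$.

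\textbf{The determinantal and small cases; the ``if'' direction.} When $X=\PP^m\times\PP^m$ or $X=v_2(\PP^n)=\PP^n(2)$, the dual $X_A^\nu$ is the determinant hypersurface of ordinary, resp.\ symmetric, square matrices of size $m+1$, resp.\ $n+1$; then ${\rm sing}(X_A^\nu)$ is cut out scheme-theoretically by the cofactors, i.e.\ is the next rank stratum, of codimension $4$, resp.\ $3$, in matrix space, so $c(A)=3$, resp.\ $c(A)=2$ (the tips $\PP^1\times\PP^1$, $v_2(\PP^1)$ give a smooth quadric, $c(A)=\infty$). For $X=(\PP^1)^\ell$ with $\ell\ge3$, the Weyman--Zelevinsky description of the singular locus of hyperdeterminants \cite{WZ} gives $c(A)=2$ for $\ell=3$ and $c(A)\le1$ for $\ell\ge4$. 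For a Veronese $X=v_d(\PP^n)$ with $d\ge3$, the classical discriminant hypersurface is singular along a divisor (the closure of the cuspidal stratum together with the two-node stratum), so $c(A)=1$. Feeding these into the Reduction: $\PP^m\times\PP^m$ yields $r\in\{1,2\}$, i.e.\ family (1); $(\PP^1)^3$ yields $r=1$, i.e.\ family (2), $(\PP^1)^4$; and $v_2(\PP^n)$ yields $r=1$, i.e.\ family (3). This proves the ``if'' direction, and also shows, within these three families, that equality fails for larger $r$.

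\textbf{The ``only if'' direction in general --- the crux.} It remains to show that every non-defective Segre--Veronese $A_{\ell,d,k}$ not of one of the three shapes above has $c(A)\le1$, i.e.\ that its $A$-discriminant is singular along a divisor; this is the hard part, and the reason for stating the result as a conjecture. The route I would take uses the conormal description of the singular locus: a general $H\in X_A^\nu$ is a singular point iff either its contact locus $\{x\in X:T_xX\subseteq H\}$ is positive--dimensional or $H$ is tangent to $X$ at two or more points, so one must bound the dimension of each of these loci---for the first, from the structure of the conormal variety of a Segre--Veronese manifold; for the second, from the geometry of its second secant construction. Equivalently, one can argue combinatorially via the Horn--Kapranov parametrization and the tropicalization of $A$-discriminants \cite{DFS}, translating ``$c(A)\ge2$'' into a constraint on the recession fan of $\Trop(X_A^\nu)$. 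Concretely, outside the three shapes one expects to exhibit an explicit pencil of singular members of $(D_A=0)$: any such Segre--Veronese should carry a one--parameter family of ``doubly degenerate'' tensors or forms---a coincidence of two contact points, or a single flat direction that propagates---hence a divisorial component of ${\rm sing}(X_A^\nu)$. Combined with the previous paragraph, this would yield $\deg ID_{r,A}=\deg MD_{r,A}\iff 1\le r<c(A)$, with $c(A)\ge2$ only for $\PP^m\times\PP^m$, $(\PP^1)^3$, $v_2(\PP^n)$---exactly the listed families. The main obstacle is making this ``doubly degenerate'' dimension count uniform, since---unlike the pure Segre case settled in \cite{WZ}---no general description of ${\rm sing}(X_A^\nu)$ for Segre--Veronese $X_A$ is presently available.
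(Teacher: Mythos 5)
The statement you are proving is stated in the paper as a \emph{conjecture}: the authors do not prove it, and only establish special cases (Proposition~\ref{proposition:r1d2} for $\ell=1$, and Theorem~\ref{theorem:d_i>1} for $r=1$ with all $d_i>1$), in both instances by directly comparing the explicit degree formulas for $MD$ (via GKZ, Theorem 13.2.5) and for $ID$ (via~\eqref{deg}). Your reduction is a different, more geometric route: using Theorem~\ref{th:main} to translate $\deg(ID_{r,A})=\deg(MD_{r,A})$ into $\codim_{X_A^\nu}(\mathrm{sing}(X_A^\nu))>r$, and then reading off the admissible $r$ from the codimension $c(A)$ of the singular locus of the dual. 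That reduction is sound, and your ``if'' direction is essentially correct: the corank-$\geq 2$ strata of the generic and symmetric determinants have codimension $3$ and $2$ in the respective hypersurfaces, and the Weyman--Zelevinsky analysis handles $(\PP^1)^3$, which recovers exactly the three listed families. This is a legitimate alternative framing, and it even explains \emph{why} the list should be what it is, which the paper's numerical computations do not.

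However, as a proof of the if-and-only-if statement the proposal has a genuine gap, which you yourself flag: the ``only if'' direction. You need that every non-defective Segre--Veronese $A_{\ell,d,k}$ outside the three shapes satisfies $c(A)\le 1$, i.e.\ that $\mathrm{sing}(X_A^\nu)$ contains a divisor of $X_A^\nu$, and for this you offer only an expectation (``one expects to exhibit an explicit pencil of singular members'') with no dimension count of the bitangent/cuspidal loci and no substitute for the Weyman--Zelevinsky stratification, which is unavailable in the mixed Segre--Veronese setting. Even the Veronese claim ($c(v_d(\PP^n))=1$ for $d\ge 3$) is asserted rather than argued, and mixed cases such as $d=(1,d_2)$ or unequal Segre factors combined with Veronese twists are not touched at all. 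So the proposal does not prove the statement; it proves (modulo the quoted classical facts) the ``if'' half and the failure of equality for larger $r$ within the three families, while the hard half remains open --- which is precisely why the paper records it as Conjecture~\ref{conjecture:main}. Note also that the paper's degree-comparison method does settle some instances your sketch leaves unproved (all $r$ for $\ell=1$, and $r=1$ when every $d_i>1$), so any completion of your approach should at least recover those.
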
 
A partial answer is given in Theorem \ref{theorem:d_i>1} and Proposition~\ref{proposition:r1d2}.  

Finally, in Section \ref{section:plane} we analyze the case of plane curves. Theorem~\ref{th:plane} shows that for planar configurations $A$ consisting of the lattice points of
a smooth polygon, the only case where $MD_{1,A}$ equals $ID_{1,A}$ are the known cases in which the polygon is
the unit square (the bilinear case) or $2 \Delta_2$, the standard triangle of size $2$. This implies that
in all other cases, the singularities of the discriminant locus have codimension one; that is, there are ``many'' different
types of singular hypersurfaces defined by $A$-polynomials. A factorization of the iterated discriminants  gives
all components of the singular locus of codimension one.

\subsection*{Acknowledgements}   We thank Carlos D'Andrea, Fr\'{e}d\'{e}ric Bihan, Laurent Bus\'{e},  Bernard Mourrain, and Giorgio Ottaviani for helpful discussions and references
 to previous work in this direction.

\section{Motivating examples}\label{section:motivating}\label{example}

 In this section we present some motivating examples that we abstract in the paper.  
 The first two correspond to two classical cases in which the iterated discriminant actually computes the mixed
 discriminant. The last two are the simplest cases which already show the occurrence of other factors of the
 iterated discriminant.

 \begin{example} \label{ex:1x1x1}
{\rm Let $A=\{(0,0),(1,0),(0,1),(1,1)\}$ be the vertices of the unit cube and 
let $f = c_{00}+c_{10}x_1 +c_{01}x_2+c_{11} x_1 x_2$ be an $A$-polynomial. 
In this case, $D_A (f)=c_{00}c_{11}-c_{10}c_{01} $ is a polynomial of degree $2$,
which equals the determinant of the matrix
\[\left(\begin{array}{cc}
c_{00} & c_{01}\\
c_{10} & c_{11}
\end{array}\right).\]

In case $r+1=2$, the mixed discriminant associated with two $A$-polynomials $p_0= c^1_{00}
+c^1_{10}x_1 +c^1_{01}x_2+c^1_{11} x_1 x_2$ and $p_1 = c^2_{00}+c^2_{10}x_1 +c^2_{01}x_2+c^2_{11} x_1 x_2$, is the following degree four irreducible polynomial,
which is the hyperdeterminant of format $2 \times 2 \times 2$ (see \cite{GKZ}, pp. 475--479):
\[
MD_{1,A}(p_0,p_1) ={c^2_{00}}^2 {c^1_{11}}^2 - 2 c^2_{00} c^2_{01} c^1_{10} c^1_{11} - 2 c^2_{00} c^2_{10} c^1_{01} c^1_{11} -2 c^2_{00} c^2_{11} c^1_{00} c^1_{11} +\]
\[ 4 c^2_{00} c^2_{11} c^1_{01} c^1_{10} + 2 c^2_{00} c^1_{00} {c^1_{11}}^2 - 4 c^2_{00} c^1_{01} c^1_{10} c^1_{11} + {c^2_{01}}^2 {c^1_{10}}^2 + 4 c^2_{01} c^2_{10} c^1_{00} c^1_{11} - \]
\[2 c^2_{01} c^2_{10} c^1_{01} c^1_{10} - 2 c^2_{01} c^2_{11} c^1_{00} c^1_{10} + 2 c^2_{01} c^1_{00} c^1_{10} c_{11} + {c^2_{10}}^2 {c^1_{01}}^2 - \]
\[2 c^2_{10} c^2_{11} c^1_{00} c^1_{01}+2 c^2_{10} c^1_{00} c^1_{01} c^1_{11} + {c^2_{11}}^2 {c^1_{00}}^2- 2 c^2_{11} {c^1_{00}}^2 c^1_{11} + 
{c^1_{00}}^2 {c^1_{11}}^2,
\]
It vanishes at $(p_0,p_1)$ with respective coefficient vectors $(1,1,-2,-1)$ and $(1,1,-3,-2)$, corresponding to the tangent hyperbolas in Figure~\ref{fig:hyper}.

One form of computing $D_A$ is as the iterated discriminant $ID_{1,A}$. 
Write
\[ \det \left(\begin{array}{cc}
c^1_{00} + \lambda c^2_{00} & c^1_{01} + \lambda c^2_{01}\\
c^1_{10} +\lambda c^2_{10} & c^1_{11} + \lambda c^2_{11}
\end{array}\right) \, = \, \Delta_0 + \Delta_1 \lambda + \Delta_2 \lambda^2,\]
and then compute
\[ MD_{1,A} (c^1, c^2) =  \Delta_1^2 - 4 \Delta_0 \Delta_2, \]
as the univariate resultant of the degree $2$ polynomial $\Delta_0 + \Delta_1 \lambda + \Delta_2 \lambda^2$ in $\lambda$ with coefficients in $\Z[c^1, c^2]$.
This compact formula is the simplest case of Sch\"afli's formula to compute the mixed discriminant $MD_{1,A}$. 
}
\end{example}

\begin{example}\label{ex:Farouki}
{\rm Let us consider again the case discussed in the Introduction corresponding to the singular intersections of two quadric surfaces $p_0, p_1$ in three-space.  
We display their common support $A$ as the columns of the following $3 \times 10$ matrix:
\[\begin{bmatrix}
0&1&0&0&2&1&1&0&0&0\\
0&0&1&0&0&1&0&2&1&0\\
0&0&0&1&0&0&1&0&1&2
\end{bmatrix}.\]
We also display the corresponding Cayley configuration $C= \Delta_1 \times A$ as the columns of the following $5 \times 20$-matrix:
\setcounter{MaxMatrixCols}{30}
$$
 \begin{small}
\begin{bmatrix}
0&1&0&0&2&1&1&0&0&0&0&1&0&0&2&1&1&0&0&0\\
0&0&1&0&0&1&0&2&1&0&0&0&1&0&0&1&0&2&1&0\\
0&0&0&1&0&0&1&0&1&2&0&0&0&1&0&0&1&0&1&2\\
1&1&1&1&1&1&1&1&1&1&0&0&0&0&0&0&0&0&0&0\\
0&0&0&0&0&0&0&0&0&0&1&1&1&1&1&1&1&1&1&1
\end{bmatrix}
\end{small}.
$$
In this case, we know that $(X_C)^\nu$ is a hypersurface by~\cite{DiR}. 
Thus we have that the polynomial $MD_{1,A}(p_0,p_1)$ cuts out the closure of the locus of coefficients for which the two quadrics lie tangent to one another at a point
and it can be computed via the discriminant $D_C$ by Proposition~\ref{prop}. It can be also computed as the iterated discriminant in~\eqref{hyper}. 
This polynomial can be studied through tropical discriminants as in~\cite{DFS}.  

Moreover,  one can compute the univariate discriminant $D_{4 \Delta_1}$ of a degree $4$ polynomial as the discriminant of its cubic resolvent
from Galois theory.
Let $\Delta_i$ denote the coefficient $\lambda^i$ in $\det(M_0+tM_1)$.  
Then
$$MD_{1,A} (p_0,p_1)=\frac{4p^3-q^2}{27},$$
where
$p=12\Delta_4\Delta_0-3\Delta_3\Delta_1+\Delta_2^2$, and
$q=72\Delta_4\Delta_2\Delta_0+9\Delta_3\Delta_2\Delta_1-27\Delta_4\Delta_1^2-27\Delta_0\Delta_3^2-2\Delta_2^3$.

This gives a compact and feasible way of computing the mixed discriminant $MD_{1,A}$ we are interested in. In fact,
expanding this expression in terms of
the coefficients of $p_0,p_1$ is beyond the capabilities of the excellent Computer Algebra System Macaulay2~\cite{M2} in a standard computer, because
it is a polynomial of degree $24$ which has degree $12$ in both the coefficients of $p_0$ and $p_1$. 
Note that a general polynomial of bidegree $(12,12)$  in two groups of $10$ variables has more than  $4 \cdot 10^{11}$ monomials! 
}

\end{example}

The general case is hinted in the following simple examples.

\begin{example} \label{ex:12}
{\rm Consider the two dimensional configuration $A = \{ (0,0), (1,0), (2,0), (0,1), (1,1)\}$
 corresponding to the first Hirzebruch surface ${\mathbb F}_1$.
\definecolor{zzttqq}{rgb}{0.6,0.2,0.}
\definecolor{qqqqff}{rgb}{0.,0.,1.}
\definecolor{xdxdff}{rgb}{0.490196078431,0.490196078431,1.}
\begin{center}\begin{tikzpicture}[line cap=round,line join=round,>=triangle 45,x=1.0cm,y=1.0cm]
\fill[color=zzttqq,fill=zzttqq,fill opacity=0.1] (0.,2.) -- (0.,0.) -- (4.,0.) -- (2.,2.) -- cycle;
\draw [color=zzttqq] (0.,2.)-- (0.,0.);
\draw [color=zzttqq] (0.,0.)-- (4.,0.);
\draw [color=zzttqq] (4.,0.)-- (2.,2.);
\draw [color=zzttqq] (2.,2.)-- (0.,2.);
\begin{scriptsize}
\draw [fill=xdxdff] (2.,0.) circle (1.5pt);
\draw [fill=xdxdff] (4.,0.) circle (1.5pt);
\draw [fill=xdxdff] (0.,2.) circle (1.5pt);
\draw [fill=qqqqff] (2.,2.) circle (1.5pt);
\draw [fill=xdxdff] (0.,0.) circle (1.5pt);
\end{scriptsize}
\end{tikzpicture}
\end{center}

Given a generic polynomial $f$ with support $A$:
\[ f(x, y) = a_0 + a_1 x + a_2 x^2 + y (b_0 + b_1 x),\]
the $A$-discriminant coincides with the resultant of the two univariate polynomials
$a_0 + a_1 x + a_2 x^2$ and $b_0 + b_1 x$ and thus is equal to the
degree $3$ polynomial
\[D_A(f) \, = a_0 b_1 ^2 - a_1 b_0 b_1 + a_2 b_0^2.\]
The mixed discriminant $MD_{1,A}$ has degree $8$, while the iterated discriminant $ID_{2,A}$ has degree $2 \cdot 3\cdot (3-1)^{1}  = 12$ by~\eqref{deg} .
There is another irreducible factor  that we explain in Theorem~\ref{th:main} and compute in Example~\ref{ex:12c}.} 
\end{example}

\begin{example}\label{ex:deg3}
{\rm
We now consider the case of a univariate polynomial of degree $3$ with $A=\{0,1,2,3\}$ and $r=1$.
Given two cubic polynomials $p_0, p_1$ depending on a variable $x$, their mixed discriminant equals the discriminant
of the Cayley configuration $$C=\{(0,0), (0,1), (0,2), (0,3), (1,0), (1,1), (1,2), (1,3)\}$$ at the polynomial
$p_0 + t p_1$ in one more variable $t$.  In fact, $D_C(p_0+tp_1)$ equals the univariate resultant ${\rm Res}_{3,3}(p_0, p_1)$.
This resultant can be computed as the determinant of the associated Sylvester matrix and therefore has degree $6$
in the vectors of coefficients of $p_0,p_1$. Since the discriminant $D_A$ of a cubic univariate polynomial has degree $4$,  
the iterated discriminant $ID_{1,A} =D_{4 \Delta_1}(D_A(p_0+tp_1))$ instead has degree $ 2 \cdot 4 \cdot 3 = 24$ according to~\eqref{deg}. It
has another irreducible factor of degree $6$ raised to the third power, which is the
Chow form of the singular locus of $D_A=0$ corresponding to degree $3$ polynomials with a triple root (a degenerate
multiple root), predicted by Theorem~\ref{th:main}.
}
\end{example}

\section{The Mixed Discriminant and the discriminant of the Cayley configuration}\label{section:mixed}

In this section we show in Proposition~\ref{prop} that the mixed discriminant  $MD(A_0,\ldots,A_r)$  
coincides in the non-defective case with the discriminant of the associated  Cayley configuration $D_C$, thus generalizing Theorem 2.1 in~\cite{CCDRS}.  
Note that when $C$ is defective these varieties need not coincide, as shown in Example~2.2 in~\cite{CCDRS}. We also characterize, 
in Proposition~\ref{prop:wz}, non-defectivity of $C$ when all $A_i$ are equal. The latter result relies on a classical criterion by Katz, 
stated as Lemma~\ref{K} below, and is a simple consequence of \cite[Th. 0.1]{WZ94}.

Recall that for a projective variety $X$, the \emph{dual defect} of $X$ is defined to be
\begin{equation}\label{eq:def}
\text{def}(X):=\text{codim}(X^\nu)-1,
\end{equation}
where $X^\nu$ is the dual variety consisting of singular hyperplane sections to $X$. 
In particular, if the dual variety is a hypersurface as expected, then the dual defect is equal to $0$ and $X$ is said to be
non-defective.  When $X=X_A$ for some finite lattice configuration $A$, we also say that $A$ is non-defective.
In this context, we have the following  lemma due  to Katz. 

\begin{lemma}\cite{Katz}\label{K}
Let $A\subset\Z^n$ be a lattice configuration with $|A|=N+1.$
Let $H_p(f)$ denote the Hessian matrix of an $A$-polynomial $f$. Then
$${\rm codim}{X_A^\nu}=1+{\rm min}_{f}\left({{\rm corank}(H_u(f))}\right)$$
where $u$ is a general point and $f$ varies among the polynomials  with support in  $A$  vanishing at $u.$
\end{lemma}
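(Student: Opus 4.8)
The plan is to read off both sides of the formula from the conormal variety of $X_A$. After a monomial change of coordinates we may assume that $A$ affinely spans $\R^n$, so that $\dim X_A=n$; let $\tilde A$ be the $(n+1)\times(N+1)$ matrix with columns $(1,a)$, $a\in A$ (it has rank $n+1$), and write $A$ also for its lower $n\times(N+1)$ block of exponents. For $u\in(K^*)^n$ and $f=\sum_a c_ax^a$, the point $u$ is a singular point of $\{f=0\}$ — equivalently, the hyperplane $H_f$ is tangent to $X_A$ at the corresponding point — exactly when $f$ and all its partials vanish at $u$, which in the torus chart is the linear condition $\tilde A\cdot(c_au^a)_{a\in A}=0$ on the vector $(c_au^a)_a\in K^{N+1}$. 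For fixed $u$ the $f$ with this property form the projectivization of an $(N-n)$-dimensional linear space, so the conormal variety $\mathrm{Con}(X_A)\subset X_A\times(\PP^{N})^{\vee}$ is irreducible of dimension $N-1$, smooth over the dense torus, and its second projection $q$ has closed image $X_A^\nu$.

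First I would identify the fibers of $q$. For such a pair $(u,f)$ the fiber $q^{-1}([f])$ is the contact locus $\Sigma_f=\{u':f(u')=0,\ \partial_if(u')=0\ \forall i\}$; since $\nabla f(u)=0$, linearizing these equations at $u$ shows that the Zariski tangent space of $\Sigma_f$ at $u$ is $\ker H_u(f)$. (Here one uses the elementary identity $x_ix_j\,\partial_i\partial_jf(u)=\sum_a c_au^a\,a_ia_j$ valid at a critical point, so that $H_u(f)$ and $A\,\mathrm{diag}((c_au^a)_a)\,A^{\top}$ differ by conjugation by $\mathrm{diag}(u)$ and hence have equal rank; this is the translation between the combinatorial incidence equations for $\mathrm{Con}(X_A)$ and the analytic Hessian.) By the theorem on the dimension of the fibers of the dominant morphism $q$ of irreducible varieties, $\dim_u\Sigma_f\ge\dim\mathrm{Con}(X_A)-\dim X_A^\nu=\codim X_A^\nu-1$, hence
$$\operatorname{corank}H_u(f)=\dim\ker H_u(f)\ \ge\ \dim_u\Sigma_f\ \ge\ \codim X_A^\nu-1$$
for every $A$-polynomial $f$ singular at $u$. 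This gives the inequality ``$\ge$''.

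For equality it suffices to exhibit, for $u$ general, one such $f$ attaining the bound, and this is where characteristic zero is essential. By generic smoothness, $q$ is a smooth morphism over a dense open of $X_A^\nu$, so a general point of $\mathrm{Con}(X_A)$ lies on a smooth fiber of $q$, of pure dimension $\codim X_A^\nu-1$; such a point has the form $(u,f)$ with $u$ general in $X_A$ and $f$ general among $A$-polynomials singular at $u$ (because the first projection $\mathrm{Con}(X_A)\to X_A$ is a projective bundle over the torus), and for it $\operatorname{corank}H_u(f)=\dim T_u\Sigma_f=\codim X_A^\nu-1$. Combining the two inequalities yields $\min_f\operatorname{corank}H_u(f)=\codim X_A^\nu-1$, as claimed.

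The main obstacle is precisely the passage from ``$\ge$'' to equality: one must know that at a general pair the Hessian degenerates exactly along $T_u\Sigma_f$ and no further, equivalently that the conormal projection $q$ has differential of maximal rank at a general point. This rests on generic smoothness (and, behind it, on the reflexivity of the conormal variety) and genuinely fails over a field of positive characteristic, so that hypothesis cannot be dropped; by contrast, the reduction to the full-dimensional case $\dim X_A=n$ and the matrix bookkeeping identifying $H_u(f)$ with $A\,\mathrm{diag}((c_au^a)_a)\,A^{\top}$ up to rank are routine.
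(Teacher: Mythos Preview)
The paper does not give its own proof of this lemma; it is quoted from Katz~\cite{K} (the Lefschetz-pencil expos\'e in SGA~7), so there is no in-paper argument to compare with. Your proof is correct and is essentially the classical conormal-variety argument: one inequality from upper semicontinuity of fiber dimension for the second projection $q:\mathrm{Con}(X_A)\to X_A^\nu$, the reverse inequality from generic smoothness of $q$ in characteristic zero, together with the observation that the Zariski tangent space of the contact locus at a singular point $u$ is exactly $\ker H_u(f)$. Your bookkeeping identifying $\mathrm{diag}(u)\,H_u(f)\,\mathrm{diag}(u)$ with $A\,\mathrm{diag}((c_au^a)_a)\,A^{\top}$ at a critical point is also correct and is the standard way to pass between the analytic and toric descriptions.

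One small point worth flagging: as literally stated in the paper, the lemma lets $f$ range over $A$-polynomials merely \emph{vanishing} at $u$, but the sentence immediately following the lemma (and the use made of it in Corollary~\ref{node}) makes clear that the intended condition is that $f$ and all its partials vanish at $u$, i.e.\ that $H_f$ is tangent to $X_A$ at the image of $u$. You have correctly worked with this intended hypothesis; without it the minimum corank would trivially be~$0$ and the formula would fail for any defective $A$.
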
 
In particular, ${\rm codim}{X_A^\nu}=1$ implies that polynomials vanishing at a general point $u$ together with their partial derivatives have Hessian of maximal rank.

Observe that Lemma~\ref{K} is equivalent to saying that in the non-defective case, the closure of the singular $A$-polynomials 
coincides with the closure of the nodal $A$-polynomials, that is, polynomials only admitting non-degenerate multiple roots.

\begin{corollary}\label{node}
If $A$ is a non-defective finite lattice configuration, then
\[X_A^\nu=\overline{\left\{f\in {\PP^N}^\nu\, : \, f(u)=0, \frac{\partial f}{\partial x_i}(u)=0\text{ and } \det (H(f))(u)\neq 0\text{ for some }u\in (K^*)^n\right\}}.\]
\end{corollary}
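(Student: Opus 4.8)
The plan is to present both sides of the identity as projections of incidence varieties and to feed in Lemma~\ref{K} to control the Hessian. Write ${\PP^N}^\nu$ for the space of coefficient vectors of $A$-polynomials, with $N+1=|A|$, and set
\[
W=\{\,f\in{\PP^N}^\nu : f(u)=0\text{ and }\tfrac{\partial f}{\partial x_i}(u)=0\text{ for all }i,\ \text{for some }u\in(K^*)^n\,\}.
\]
A point $x\in(K^*)^n$ is a singular point of the hypersurface $(f=0)$ exactly when $f$ and all of its first partials vanish at $x$, so the definition of the dual variety gives $X_A^\nu=\overline W$. Let $Z\subseteq W$ be the set of $f$ for which a witnessing point $u$ can be chosen with $\det H(f)(u)\neq 0$; the right-hand side of the corollary is precisely $\overline Z$. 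Since $Z\subseteq W\subseteq X_A^\nu$ and $X_A^\nu$ is closed we have $\overline Z\subseteq X_A^\nu$ for free, so it remains to prove $W\subseteq\overline Z$.

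To this end I would introduce the incidence variety
\[
\widetilde W=\{\,(u,f)\in(K^*)^n\times{\PP^N}^\nu : f(u)=0\text{ and }\tfrac{\partial f}{\partial x_i}(u)=0\text{ for all }i\,\},
\]
which is closed in $(K^*)^n\times{\PP^N}^\nu$ and whose image under the second projection $\pi_2$ equals $W$. The key structural claim is that $\widetilde W$ is irreducible. Over a point $u$, the fibre of the first projection $\widetilde W\to(K^*)^n$ is the projectivization of $\ker M_u$, where $M_u\colon K^{N+1}\to K^{n+1}$ sends a coefficient vector to $(f(u),\partial_{x_1}f(u),\dots,\partial_{x_n}f(u))$. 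The column of $M_u$ indexed by $a\in A$ is $u^a$ times the fixed vector $(1,a_1,\dots,a_n)$, so $\operatorname{rank}M_u=1+\dim\operatorname{aff}(A)$ for every $u\in(K^*)^n$. Hence the subspaces $\ker M_u$ are the fibres of a vector subbundle of the trivial bundle $(K^*)^n\times K^{N+1}$; the total space of this bundle, and therefore its projectivization $\widetilde W$, is irreducible.

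Now I would bring in Lemma~\ref{K}: since $A$ is non-defective, the formula there forces, for a general $u\in(K^*)^n$, the existence of an $A$-polynomial $f$ with $f(u)=0$, $\nabla f(u)=0$ and $H_u(f)$ of maximal rank $n$, i.e. with $\det H(f)(u)\neq 0$. Thus the open subset $\widetilde Z=\{(u,f)\in\widetilde W:\det H(f)(u)\neq 0\}$ of $\widetilde W$ is nonempty, hence dense in the irreducible variety $\widetilde W$, so $\overline{\widetilde Z}=\widetilde W$. Applying the continuous map $\pi_2$ and noting $\pi_2(\widetilde Z)=Z$ gives $W=\pi_2(\widetilde W)=\pi_2(\overline{\widetilde Z})\subseteq\overline{\pi_2(\widetilde Z)}=\overline Z$; combined with $\overline Z\subseteq X_A^\nu=\overline W$, this yields $X_A^\nu=\overline Z$.

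The step demanding the most care is the irreducibility of $\widetilde W$ — equivalently, the fact that the linear system of $A$-polynomials singular at a prescribed point of the torus has dimension independent of the point, which is what makes the constant-rank argument go through. Once that and the nonemptiness coming from Lemma~\ref{K} are in place, the rest is routine topology of closures and continuous images.
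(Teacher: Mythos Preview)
Your argument is correct and is precisely the standard way to flesh out the paper's one-line proof, which simply asserts that one inclusion is ``by definition'' and the other ``follows by Lemma~\ref{K}'' (the paper in fact appears to have the two directions swapped). The irreducibility of the incidence variety $\widetilde W$ together with Lemma~\ref{K} giving $\widetilde Z\neq\emptyset$ is exactly how one makes ``follows from Lemma~\ref{K}'' rigorous; the paper omits this entirely.

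One small correction that does not affect the outcome: the column of $M_u$ indexed by $a$ is $u^a\,(1,a_1/u_1,\dots,a_n/u_n)$, not $u^a\,(1,a_1,\dots,a_n)$, since $\partial_{x_i}x^a=a_ix^{a-e_i}$. After multiplying the $i$-th row by $u_i$ (an invertible row operation on the torus) you obtain your claimed form, so the constant-rank conclusion and hence the subbundle/irreducibility argument go through unchanged. Also note that Lemma~\ref{K} as literally stated in the paper only requires $f(u)=0$, but the sentence immediately following it makes clear that the intended hypothesis includes $\nabla f(u)=0$; your reading matches the intended one.
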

\begin{proof} The inclusion ``$\subseteq$'' follows by definition and the inclusion ``$\supseteq$'' follows by Lemma \ref{K}.    \end{proof}

Let us now consider the Cayley configuration $C$ associated to $r+1$ finite lattice
configurations $A_0,\ldots,A_r$ in $\Z^n.$  
We remark that the $r$ in \cite{GKZ} corresponds to our $r-1.$
We use the following notation: $(\lambda,x)=(\lambda_0, \ldots, \lambda_r, x_1, \dots, x_n)$.
A polynomial $f$ with support on $C$ has the form
\[  f \, = \,  \sum_0^r \lambda_i p_i,\]
where $p_i$ are $A_i$-polynomials in the variables $x$. Consider the Jacobian matrix  $\begin{bmatrix} \nabla_x(p_0)(u)&\ldots &\nabla_x(p_r)(u)\end{bmatrix}$
of $p_0, \dots, p_r$ at $u$.
Notice that $f \in X_C^\nu$ if there exists
\[(\lambda,u) \in (\C^\nu)^{r+1} \times
(\C^\nu)^n \text{ s.t. } p_0(u) = \dots = p_r(u)=0 \text{ and } \lambda\in \ker\begin{bmatrix} \nabla_x(p_0)(u)&\ldots &\nabla_x(p_r)(u)\end{bmatrix}^T;\]
 or equivalently, if $\sum_{i=0}^r \lambda_i \nabla_x(p_i)(u)=0$ and thus the gradients are linearly dependent.
  In particular, 
 $${\rm rank}( \begin{bmatrix} \nabla_x(p_0)(u)&\ldots &\nabla_x(p_r)(u)\end{bmatrix} )\le r.$$
We will now prove that the locus where the rank is exactly $r$ characterizes the dual variety $X_C^\nu,$ assuming it is a hypersurface.

\begin{proposition} \label{prop}  Let $A_0, \dots,A_r$ and $C$ as above and assume that $C$ is non-defective.
 Then, 
$$MD(A_0,\ldots, A_r) (p_0, \dots, p_r)=D_C (\sum_{i=0}^r \lambda_i p_i),$$
where $p_i$ are $A_i$-polynomials for $i=0,\dots, r$ and $(\lambda_0, \dots, \lambda_r)$ are variables. 
 \end{proposition}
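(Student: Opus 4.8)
The plan is to show the two varieties coincide by a double inclusion at the level of their defining loci, then invoke irreducibility plus the non-defectivity hypothesis to conclude that the two irreducible polynomials agree up to sign. Concretely, write $P = \sum_{i=0}^r y_i p_i$, viewed as a $C$-polynomial in the variables $(y,x)$. First I would unwind the definition of the dual variety $X_C^\nu$ using Corollary~\ref{node}: since $C$ is non-defective, a generic point of $X_C^\nu$ is the coefficient vector of a $C$-polynomial $P$ having a non-degenerate singular point $(\eta,u)\in(K^*)^{r+1}\times(K^*)^n$, i.e. $P(\eta,u)=0$ and all partials $\partial P/\partial y_j$, $\partial P/\partial x_i$ vanish at $(\eta,u)$. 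The equations $\partial P/\partial y_j (\eta,u) = p_j(u) = 0$ say precisely that $u$ is a common zero of $p_0,\dots,p_r$; and $\partial P/\partial x_i(\eta,u) = \sum_j \eta_j \,\partial p_j/\partial x_i (u) = 0$ for all $i$ says that the vector $\eta$ lies in the kernel of the transposed Jacobian $J(u)^T := [\nabla_x p_0(u)\ \cdots\ \nabla_x p_r(u)]^T$, i.e. the gradients $\nabla_x p_i(u)$ are linearly dependent. (Euler's relation on homogeneous pieces handles the $y$-homogeneity automatically; the identity $P(\eta,u)=\sum_j \eta_j p_j(u)$ is then also $0$, so it is not an extra condition.) This already gives the inclusion of the locus cut out by $D_C$ into the mixed discriminantal variety.

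For the reverse inclusion — and the harder half — I would start from a generic coefficient tuple $(p_0,\dots,p_r)$ in the mixed discriminantal variety. By definition there is an isolated $u\in(K^*)^n$ with $p_0(u)=\dots=p_r(u)=0$, the gradients $\nabla_x p_i(u)$ linearly dependent, but every $r$-subset of them independent; the latter forces $\mathrm{rank}\,J(u) = r$ exactly, so $\ker J(u)^T$ is one-dimensional, spanned by some $\eta$ with \emph{all} coordinates nonzero (if some $\eta_j=0$ the remaining $r$ gradients would be dependent). Thus $(\eta,u)$ is a singular point of $P=\sum y_i p_i$ lying in the torus $(K^*)^{r+1}\times(K^*)^n$, so the coefficient vector of $P$ lies in $X_C^\nu$. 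The point that needs care here is that membership in the mixed discriminantal variety is a \emph{closure} condition, and likewise $X_C^\nu$ is a closure; I would argue on a dense open subset where the non-degeneracy (rank exactly $r$, all $\eta_j\neq 0$, $u$ isolated) holds and then take closures on both sides. One should also note why the singular point being non-degenerate (Hessian condition) is not an obstruction: Corollary~\ref{node} tells us that in the non-defective case the closure of singular $C$-polynomials equals the closure of the \emph{nodal} ones, so having a singular point in the torus is enough to land in (the closure defining) $X_C^\nu$ — we need not separately produce a node.

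Having established that the mixed discriminantal variety of $(A_0,\dots,A_r)$ and the hypersurface $(D_C=0)$ have the same closure (as subsets of the space of coefficient tuples $(p_0,\dots,p_r)$, which is the same as the space of $C$-polynomials $P$), I would finish as follows. Since $C$ is non-defective, $D_C$ is irreducible, so $(D_C=0)$ is an irreducible hypersurface; hence the mixed discriminantal variety is an irreducible hypersurface, which is exactly the condition under which $MD(A_0,\dots,A_r)$ is defined as the irreducible polynomial cutting it out (rather than being set to $1$). Two irreducible polynomials with the same zero set over an algebraically closed field agree up to a nonzero scalar, and since both are taken with integer coefficients and primitive (the normalization in \cite{GKZ}), they agree up to sign — matching the convention $D_A$ is defined up to sign. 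This yields $MD(A_0,\dots,A_r)(p_0,\dots,p_r) = D_C(\sum_{i=0}^r y_i p_i)$, as claimed. The main obstacle I anticipate is the bookkeeping in the reverse inclusion: carefully transferring between the ``isolated non-degenerate multiple root'' condition defining $MD$, the rank-exactly-$r$ condition, and the torus-singularity condition defining $X_C^\nu$, all while respecting that both sides are defined as Zariski closures, so that one works generically and then closes up without losing or gaining components.
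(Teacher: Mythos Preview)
Your reverse inclusion $X_{MD}\subseteq X_C^\nu$ is fine and matches the paper. The gap is in the forward inclusion $X_C^\nu\subseteq X_{MD}$. You invoke Corollary~\ref{node} to say that a generic point of $X_C^\nu$ corresponds to a $C$-polynomial with a \emph{non-degenerate} singular point $(\eta,u)$, but you then write ``i.e.\ $P(\eta,u)=0$ and all partials vanish at $(\eta,u)$'' and use only that. This drops the Hessian condition, which is the entire content of ``non-degenerate''. Vanishing of the partials gives a common zero $u$ of the $p_i$ with $\eta$ in $\ker J(u)^T$, i.e.\ linearly dependent gradients; it does \emph{not} give a non-degenerate multiple root in the sense defining $X_{MD}$, which requires every $r$-subset of the $r+1$ gradients to be independent, equivalently $\mathrm{rank}\,J(u)=r$ exactly. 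So you have only shown that $X_C^\nu$ lies in the locus of coefficients admitting \emph{some} multiple root, not in $X_{MD}$. Consequently your final paragraph is circular: from $X_{MD}\subseteq X_C^\nu$ alone you cannot conclude that $X_{MD}$ is a hypersurface (hence $MD\neq 1$), and without that the comparison of irreducible polynomials does not go through.

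The paper closes exactly this gap by exploiting the Hessian condition you discarded. Working in the chart $\lambda_0=1$, the Hessian of $f=\sum y_ip_i$ at $(\lambda,u)$ has block form
\[
H(f)(\lambda,u)=\begin{bmatrix} \sum_i\lambda_i\,H(p_i)(u) & J'(u)^T\\ J'(u) & 0\end{bmatrix},
\]
with $J'(u)$ the $r\times n$ matrix of rows $\nabla p_1(u),\dots,\nabla p_r(u)$. The zero block forces $\det H(f)\neq 0$ to imply $\mathrm{rank}\,J'(u)=r$; repeating with the chart $\lambda_j=1$ shows every $r$-subset of the gradients is independent. That is precisely the non-degeneracy needed to place the generic point of $X_C^\nu$ in $X_{MD}$, and it is what makes Corollary~\ref{node} do real work in this direction rather than merely supplying a torus singularity. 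Your remark that the Hessian condition is ``not an obstruction'' in the reverse inclusion is correct but misses that it is the key \emph{tool} in the forward one.
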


\begin{proof}

Let $\phi_f$ be the tuple of coefficients of $f$. Corollary \ref{node} implies that 
$$X_C^\nu=\overline{\{ \phi_f \; :\;  f(\lambda,u)=0, p_{A_i}(u)=0,  i=0, \dots, r, \, 
\sum_{j=0}^r\lambda_j \frac{\partial p_{A_j}}
{\partial x_i}(u)=0 \text{ and } \det (H(f))(\lambda,u)\neq 0 \}},$$
where $(\lambda,u) \in (\C^\nu)^{r+1} \times(\C^\nu)^n.$ Here, $H(f)(\lambda,u)$ means the following: as $f$ is
homogeneous in the variables $\lambda$ and $\lambda \in (\C^\nu)^{r+1}$,  we assume that $\lambda_0=1$ and that
$(\lambda_1, \dots, \lambda_r)$ are its affine coordinates. Thus, $H(f)(\lambda,u)$ is the Hessian of $f$
with respect to the variables $(\lambda_1, \dots, \lambda_r, x_1,\dots, x_n)$ . 
This Hessian matrix is of the form
\[  H_{(u,\lambda)}(\phi_f)= \begin{bmatrix}    \sum \lambda_i H(p_{A_i})(u)&  
 \begin{bmatrix} \nabla(p_{A_1})(u)  \\ \ldots \\   \nabla(p_{A_r})(u)    \end{bmatrix}^T \\
\begin{bmatrix} \nabla(p_{A_1})(u)  \\ \ldots \\   \nabla(p_{A_r})(u)    \end{bmatrix} &0
\end{bmatrix}.  \]
It follows that if $\phi_f\in X_C^\nu$  and $\det H(f)(\lambda, u) \neq 0,$ which happens for generic points in $X_C^\nu$
 by Corollary~\ref{node}, then $\textrm{rank} \begin{bmatrix} \nabla(p_{A_1})(u)  \\ \ldots \\   \nabla(p_{A_r})(u)\end{bmatrix} =r$,
 that is, 
 the gradients of the polynomials $p_{A_i}(u)$ for $i\neq 0$ form a matrix of rank $r$, that is, they are linearly independent.
 This happens similarly for the gradients of any subset of $r$ polynomials $p_{A_j}(u)$.
 Moreover, 
this is exactly the condition implying that $\phi_f$ belongs to the mixed-discriminantal variety $MD(A_0,\ldots, A_r)=0$
which we denote by  $X_{MD}$. It follows that $X_C^\nu\subseteq X_{MD}$ and that 
$X_{MD}$ is also a hypersurface, i.e. $MD(A_0,\ldots, A_r)\neq 1.$

The reverse inclusion follows essentially from the definition. In fact if $\phi_f\in X_{MD}$ is generic,
then there is a  common zero $u \in (C^\nu)^n$ of $p_{A_0}, \dots, p_{A_r}$ 
and a linear dependency $\sum \lambda_i \nabla(p_{A_i})(u)=0$ with all $\lambda_i \neq 0,$
because all the maximal minors in the matrix
$\begin{bmatrix} \nabla(p_{A_0})(u)  \\ \ldots \\   \nabla(p_{A_r})(u)    \end{bmatrix}$
are assumed to be nonzero. It follows that  $\phi_f\in    X_C^\nu.$ \end{proof}

 Notice that if $A_0=A_1=\ldots=A_r=A$ then $C=\{e_0, \dots, e_r\} \times A$, which is usually written as $C= \Delta_r \times A$.  Following~\cite{WZ94}, we define the following quantity
 associated to a projective variety $X$:
 \begin{equation}\label{eq:mu}
 \mu(X) \, = \, \dim(X) + {\rm def}(X),
 \end{equation}
where the defect of $X$ has been defined in~\eqref{eq:def}. We end this section with the following
result about non-defectivity.

\begin{proposition}\label{prop:wz}
Let $A$ be a non-defective finite lattice configuration.
Then,  the associated Cayley configuration $C= \Delta_r \times A$ is non-defective if and only if
 $r \le \dim(X_A)$.
\end{proposition}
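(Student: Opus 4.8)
The plan is to apply Katz's criterion (Lemma~\ref{K}) directly to the Cayley configuration $C=\Delta_r\times A$, reusing the block form of the Hessian from the proof of Proposition~\ref{prop}. Geometrically $X_C$ is the Segre product $X_A\times\PP^r$ (with $\PP^r$ linearly embedded), and the statement is the instance $\text{def}(X_A\times\PP^r)=\max(0,\,r-\dim X_A)$ of the Weyman--Zelevinsky formula \cite[Th.~0.1]{WZ94}; what follows is a self-contained verification of it. Since sparse discriminants are affine invariants, I would first normalize $A$ by a unimodular affine map so that $A$ affinely spans $\Z^n$ and $0\in A$; then $\dim X_A=n$, the configuration $C$ is carried to $\Delta_r\times A$ for the normalized $A$, and the assertion becomes: $C$ is non-defective if and only if $r\le n$.

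A $C$-polynomial is $f=p_0+\sum_{i=1}^{r}y_i\,p_i$ with the $p_i$ $A$-polynomials in $x$. Dehomogenizing in the $y$-variables with $\lambda_0:=1$ exactly as in the proof of Proposition~\ref{prop}, the Hessian of $f$ at a general point $(\xi,\lambda)\in(K^*)^{n}\times(K^*)^{r}$ is the $(n+r)\times(n+r)$ symmetric matrix
\[
M=\begin{bmatrix} B & J^{T}\\ J & 0\end{bmatrix},\qquad
B=\sum_{i=0}^{r}\lambda_i\,H(p_i)(\xi),\qquad
J=\begin{bmatrix}\nabla p_1(\xi)\\ \vdots\\ \nabla p_r(\xi)\end{bmatrix}.
\]
A short computation with kernels gives $\operatorname{corank}(M)=(r-\operatorname{rank}(J))+\operatorname{corank}(B|_{\ker J})$, where $B|_{\ker J}$ denotes $B$ viewed as a quadratic form on $\ker J\subseteq K^{n}$. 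By Lemma~\ref{K}, $\codim X_C^{\nu}=1+\min_{f}\operatorname{corank}(M)$, the minimum ranging over $C$-polynomials $f$ vanishing at $(\xi,\lambda)$.

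For the ``only if'' direction: if $r>n$ then $\operatorname{rank}(J)\le n<r$ for every choice of $f$, hence $\operatorname{corank}(M)\ge r-n\ge 1$ and $\codim X_C^{\nu}\ge 2$, so $C$ is defective. For the ``if'' direction, assume $r\le n$; I must produce an $f$ with $\operatorname{corank}(M)=0$. Non-defectivity of $A$ together with Lemma~\ref{K} applied to $A$ yields an $A$-polynomial $g$ with $g(\xi)=0$ and $H(g)(\xi)$ of full rank $n$ at our general $\xi$. I would then choose an $(n-r)$-dimensional subspace $W\subseteq K^{n}$ on which the nondegenerate form $H(g)(\xi)$ restricts nondegenerately (a generic $W$ works, since the isotropic planes form a proper subvariety of the Grassmannian), take $J$ to be any rank-$r$ matrix with $\ker J=W$, choose $A$-polynomials $p_1,\dots,p_r$ whose gradients at $\xi$ are the rows of $J$ (possible because the vectors $\nabla(x^{a})(\xi)$, $a\in A$, span $K^{n}$ once $A$ affinely spans), adjust the constant term of $p_1$ so that $\sum_{i=1}^{r}\lambda_i p_i(\xi)=0$, and set $p_0=t\,g$. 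Then $f=tg+\sum y_i p_i$ vanishes at $(\xi,\lambda)$, $\operatorname{rank}(J)=r$, and $B|_{W}=t\,H(g)(\xi)|_{W}+(\text{a fixed form})$, which is nondegenerate for all but finitely many $t$. For such a $t$, $\operatorname{corank}(M)=0$, hence $\codim X_C^{\nu}=1$ and $C$ is non-defective.

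I expect the ``if'' direction to be the main obstacle: coupling the two requirements that $J$ have full rank $r$ \emph{and} that the combined Hessian $B$ restrict nondegenerately to $\ker J$. The idea that makes it work is that non-defectivity of $A$ hands us a single full-rank Hessian $H(g)(\xi)$, and scaling its coefficient $t$ against the bounded contribution of $p_1,\dots,p_r$ pushes $B|_{\ker J}$ off the degeneracy locus. A routine check along the way is that the Hessian appearing in Lemma~\ref{K} for the configuration $C$ is exactly the matrix $M$ above (the same dehomogenization in the $y$-variables already used in the proof of Proposition~\ref{prop}), and, as a sanity check, that the resulting $\codim X_C^{\nu}=1+\max(0,\,r-n)$ agrees with the Weyman--Zelevinsky dual-defect computation for the Segre product $X_A\times\PP^{r}$.
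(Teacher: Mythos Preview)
Your route differs from the paper's: the paper simply quotes \cite[Th.~0.1]{WZ94}, computes $\mu(\PP^r\times X_A)=\max(r+\dim X_A,\,2r,\,\dim X_A)$ from $\operatorname{def}(\PP^r)=r$ and $\operatorname{def}(X_A)=0$, and reads off the defect in two lines. Your direct attack via Katz's criterion and the block Hessian is more hands-on and essentially reproves the needed instance of the Weyman--Zelevinsky formula; the corank identity $\operatorname{corank}(M)=(r-\operatorname{rank} J)+\operatorname{corank}(B|_{\ker J})$ is correct, and the ``only if'' direction goes through as written, since $\operatorname{rank} J\le n$ holds for \emph{every} $C$-polynomial $f$.

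There is, however, a gap in the ``if'' direction. Katz's criterion (cf.\ \cite{K}, the sentence immediately following Lemma~\ref{K}, and the way Corollary~\ref{node} is invoked in the proof of Proposition~\ref{prop}) computes the defect from the Hessian of a \emph{tangent} hyperplane: the minimum must be taken over $C$-polynomials $f$ with $f(\xi,\lambda)=0$ \emph{and} all first partials vanishing at $(\xi,\lambda)$. In your coordinates this means $p_i(\xi)=0$ for each $i=0,\dots,r$ together with $\nabla p_0(\xi)+\sum_{i\ge1}\lambda_i\nabla p_i(\xi)=0$. Your construction only arranges $\sum_{i\ge1}\lambda_i p_i(\xi)=0$, and with $p_0=tg$ the gradient condition reads $t\,\nabla g(\xi)=-\sum_i\lambda_i(\text{row}_i\,J)$, which for $\lambda\in(K^*)^r$ and linearly independent rows is a nonzero vector you have not arranged to hit. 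The repair is short: use the tangent form of Katz for $A$ to obtain $g$ with $g(\xi)=0$, $\nabla g(\xi)=0$, and $H(g)(\xi)$ nonsingular; choose each $p_i$ ($i\ge1$) with $p_i(\xi)=0$ and $\nabla p_i(\xi)$ equal to the $i$th row of $J$; and set $p_0=tg+h$ for an $A$-polynomial $h$ with $h(\xi)=0$ and $\nabla h(\xi)=-\sum_i\lambda_i(\text{row}_i\,J)$. Such $p_i$ and $h$ exist because the value-plus-gradient evaluation $f\mapsto(f(\xi),\nabla f(\xi))$ is onto $K^{n+1}$ once $A$ affinely spans. Then $B|_W=t\,H(g)(\xi)|_W+(\text{fixed})$ exactly as you wrote, nondegenerate for generic $t$, and the rest of your argument goes through unchanged.
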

 
 \begin{proof}
 Note that $X_C$ equals the Segre embedding of $\PP^{r}\times X_A$. We can then use Theorem 0.1 in~\cite{WZ}, which says that
 $$ \mu(\PP^{r}\times X_A)= {\rm max}(r+\dim(X_A), r+ {\rm def}(\PP^{r}), \dim(X_A) + {\rm def}(X_A)).$$
 According to~\eqref{eq:mu}, we have that  $\mu(X_C) = r + \dim(X_A) + {\rm def}(X_C)$. Since 
 ${\rm def}(\PP^{r})=r$, and by hypothesis ${\rm def}(X_A)=0$, we get that
 $$\mu(X_C) = {\rm max}(r+\dim(X_A), 2r, \dim(X_A)). $$
 When  $r \le \dim(X_A)$, we get that $\mu(X_C) = r + \dim(X_A)$   
 which implies that ${\rm def}(X_C)=0$. On the other side, when $r > \dim(X_A)$,
 we have that  $\mu(X_C) = 2r$ and so ${\rm def}(X_C)= r - \dim(X_A) >0$.
 \end{proof}

\section{The multivariate iterated discriminant}\label{section:iterated}
In the remainder of the paper we will consider the case  $A_i=A$ for $i=0,\ldots,r.$ In order to establish an iterated process for the mixed 
discriminant it is convenient  to consider the {\em  geometric iterated discriminant}  $JD_{r,A}$ introduced in Definition~\ref{def:Sigma} below.
In  Proposition~\ref{discr}  we  prove  that this polynomial coincides with the iterated discriminant $ID_{r,A}$ from Definition~\ref{def:id}.  
It implies that Theorem~\ref{th:main}, which can be considered the main result of this paper, also holds for $ID_{r,A}$, as stated in the Introduction. 

Recall that given an irreducible and reduced projective variety $Y\subset\PP^N$ of codimension $s$, its Chow form $Ch_Y$ is defined as follows.
Consider linear subspaces of dimension $\ell$ in $\PP^N$,  $L\in Gr(\ell+1,N+1).$ If $s\geq \ell+1$, any generic $L$  will not intersect $Y.$ The
irreducible subvariety \[\{L\in Gr(\ell+1,N+1)\, :\, L\cap Y\neq\emptyset\}\] parametrizing the exceptional intersection locus,  has codimension $(s-\ell)$ in $Gr(\ell+1,N+1).$ 
In  case $\ell=s-1$ the defining polynomial is denoted by $Ch_{Y}$ and it is called the {\em Chow form} of $Y$ \cite[page~99]{GKZ}.

We also need to recall two classical facts which will be used in the proof of our main Theorem~\ref{th:main}.

\begin{remark}\label{rem}
{\rm Given a finite lattice configuration $A$ and a generic singular hyperplane section of $X_A$, we can recover the
intersection point by means of the gradient of the discriminant $D_A$. Precisely, 
\begin{enumerate}
\item As we are assuming that ${\rm char}(K)=0$, if a regular point $H$ in the dual variety $X_A^\nu$ is tangent to $X_A$ at a regular point $y_H$, 
then this projective point is unique and $y_H=\nabla D_A(H)$ \cite[Th.1.1, Ch. 1]{GKZ}.
This is referred to as {\em biduality}.
\item When  $X_A^\nu\subset(\PP^N)^\nu$ is a hypersurface, biduality implies that the  Gauss map $\gamma: X_A^\nu\dashrightarrow \PP^n$  
 is defined by $H\mapsto  \nabla D_A(H)=y_H$ and the closure of its image equals $X_A$.
\end{enumerate}
}
\end{remark}

Let $A=\{a_0,\ldots,a_N\}\subset\Z^n$ be a lattice configuration. We will assume henceforth that  $A$ is non-defective
and that $D_A$ is a homogeneous polynomial of degree $\delta >0.$  

Given $A$-polynomials 
$$p_i=\sum_{j=0}^N c_{ij}x^{a_j},\quad\quad i=0,\ldots r,$$ we also denote by $(p_0,\ldots,p_{r})\in(\PP^{(r+1)(N+1)-1})^\nu$ 
the vector of their coefficients.  For any $\lambda=(\lambda_0,\ldots,\lambda_r)\in\PP^{r}$ 
we write $$P_\lambda:=\lambda_0p_0+\ldots + \lambda_r p_r\in(\PP^N)^\nu.$$

\begin{definition}\label{def:Sigma}
Consider the incidence variety
\begin{equation}\label{Sigma}\Sigma=\left\{((p_0,p_1,\ldots,p_r),\lambda)\in (\PP^{(r+1)(N+1)-1})^\nu\times 
\PP^{r} \, : \, \sum_j c_{ij}\frac{\partial D_A}{\partial c_j}(P_\lambda)=0, \,  i=0,\ldots,r\right\}.\end{equation}
Let $\pi:\Sigma\to (\PP^{(r+1)(N+1)-1})^\nu$ be the linear projection onto the first factor.
 The {\em $r$-multivariate iterated dual scheme} $\pi(\Sigma)$  is defined by the projective elimination ideal
\begin{equation} \label{eq:pi} 
\pi I=(I:m^{\infty})\cap \mathbb{C}[c], \end{equation}
where $\mathbb{C}[c]$ is the ring of polynomials in the variables $c_{ij}$, $m$ is the the irrelevant ideal of $\PP^r$, and $I$ is the ideal  $$I=\left\langle\sum_j c_{ij}\frac{\partial D_A}{\partial c_j}(P_\lambda), i=0,\ldots,r \right\rangle.$$
When $\pi(\Sigma)$  has codimension one,  we denote by  $JD_{r,A}\in \Z[c]$ a generator (unique up to multiplication by a nonzero constant) of the union of the codimension one 
components of $\pi I$ and we call it the \emph{geometric iterated discriminant}.
\end{definition}
Notice that the projection is in general not irreducible;  see for instance Example \ref{ex:12}. We will see in
Proposition~\ref{discr} below that the geometric iterated discriminant $JD_{r,A}$ coincides with the
more naive definition of the iterated discriminant $ID_{r,A}$ from Definition~\ref{def:id}.

Let $(p,\lambda)=((p_0,\ldots,p_r),(\lambda_0,\ldots,\lambda_r))\in\Sigma$. 
In order to understand the projection $\pi$ we consider two auxiliary maps, $\phi: \Sigma\to X_A^\nu$ and $T: (\PP^{(r+1)(N+1)-1})^\nu\dashedrightarrow Gr(r+1, N+1):$
\[\begin{tikzcd}
 \Sigma \arrow{r}{\phi} \arrow[swap]{d} &  X_A^\nu  \\
\pi(\Sigma) \arrow[dashed]{r}{T} & Gr(r+1, N+1)
\end{tikzcd}
\]
defined by $\phi(p,\lambda)=P_\lambda$ and $T(p_0,\ldots, p_r)=T_p,$ where we denote by $T_p$ the projective linear span of $p_0,\ldots, p_r.$

\begin{lemma} \label{lem:euler}  Let $p=(p_0,\ldots,p_r)\in(\PP^{(r+1)(N+1)-1})^\nu$ such that  $JD_{r,A}(p)=0$. Then, $T_p$ is tangent to $X_A^\nu$ at some point $\xi.$
\end{lemma}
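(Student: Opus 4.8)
The plan is to unwind the definition of $JD_{r,A}$ in terms of the incidence variety $\Sigma$ and translate the condition $\sum_j c_{ij}\frac{\partial D_A}{\partial c_j}(P_\lambda)=0$ into a statement about tangency of the linear span $T_p$ to the dual variety $X_A^\nu$. The starting observation is that $JD_{r,A}(p)=0$ precisely when $p\in\pi(\Sigma)$, i.e. when there exists $\lambda\in\PP^r$ with $((p_0,\dots,p_r),\lambda)\in\Sigma$; so it suffices to prove that such a $\lambda$ exists if and only if $T_p$ is tangent to $X_A^\nu$ at some smooth point $\xi$.

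First I would interpret the defining equations of $\Sigma$. The vector $\nabla D_A(P_\lambda)=\left(\frac{\partial D_A}{\partial c_j}(P_\lambda)\right)_j$ is, by biduality (Remark~\ref{rem}), the point $y_{P_\lambda}\in X_A$ at which the hyperplane section $P_\lambda$ is singular (when $P_\lambda$ is a smooth point of $X_A^\nu$). The condition $\sum_j c_{ij}\frac{\partial D_A}{\partial c_j}(P_\lambda)=0$ for all $i=0,\dots,r$ says exactly that each $p_i$, viewed as a linear functional on $\PP^N$ via its coefficient vector, annihilates $\nabla D_A(P_\lambda)$; equivalently, the point $\nabla D_A(P_\lambda)\in\PP^N$ lies in the orthogonal complement (in the dual pairing) of the span $T_p=\langle p_0,\dots,p_r\rangle$. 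On the other hand, $P_\lambda=\sum\lambda_i p_i$ lies in $T_p$ itself. So a pair $(p,\lambda)$ lies in $\Sigma$ iff there is a point $H=P_\lambda\in T_p$ whose image $\nabla D_A(H)$ under the Gauss map lies in $T_p^{\perp}$. Now the key point: the tangent space to the hypersurface $X_A^\nu$ at a smooth point $H$ is exactly the hyperplane $\{v : \sum_j v_j\frac{\partial D_A}{\partial c_j}(H)=0\}$, i.e. it is cut out by the linear form with coefficient vector $\nabla D_A(H)$. Thus $\nabla D_A(H)\in T_p^{\perp}$ is equivalent to saying that $T_p$ is contained in the tangent hyperplane $\mathbb{T}_H X_A^\nu$, i.e. that $T_p$ is tangent to $X_A^\nu$ at $H$. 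This gives the equivalence directly at smooth points.

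The remaining work is to handle the boundary/closure issues carefully, and this is where I expect the main obstacle to lie. There are two directions. For ``$\Rightarrow$'': if $JD_{r,A}(p)=0$ then $p$ lies in a codimension-one component of $\pi(\Sigma)$, hence is a limit of points $p'$ for which the witnessing $\lambda'$ gives $P_{\lambda'}$ a \emph{smooth} point of $X_A^\nu$ (generic behaviour along $\Sigma$); one then argues, passing to a limit in the compact incidence correspondence $\{(H,p): H\in T_p,\ T_p\subseteq \mathbb{T}_H X_A^\nu\}$ — which is closed — that the limit point $H=P_\lambda$ still satisfies the tangency condition, possibly at a singular point of $X_A^\nu$, but the statement only claims tangency ``at some point $\xi$'' so this is acceptable; if one wants $\xi$ smooth, one restricts to the generic component. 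For ``$\Leftarrow$'': if $T_p$ is tangent to $X_A^\nu$ at some point $\xi$, choose $\lambda$ so that $P_\lambda=\xi$ (possible since $\xi\in T_p$ when tangency is witnessed by a point of $T_p$ — here I should be careful that ``tangent at $\xi$'' in this paper means $\xi\in T_p\cap X_A^\nu$ with $T_p\subseteq\mathbb{T}_\xi X_A^\nu$), and then $(p,\lambda)\in\Sigma$, so $p\in\pi(\Sigma)$ and hence $JD_{r,A}(p)=0$ provided $\pi(\Sigma)$ is a hypersurface. The one genuine subtlety is the relationship between ``$p\in\pi(\Sigma)$'' and ``$JD_{r,A}(p)=0$'': since $JD_{r,A}$ generates only the codimension-one part of $\pi I$, one needs that $\pi(\Sigma)$ is either equidimensional of codimension one or that its higher-codimension components do not spuriously enlarge the vanishing locus — I would address this by noting $V(JD_{r,A})=\overline{\pi(\Sigma)}$ holds set-theoretically when $\pi(\Sigma)$ has a codimension-one component and otherwise interpreting the statement appropriately (and in the defective/empty case both sides are handled by the convention). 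I would phrase the lemma's proof mainly for the generic picture and invoke closedness of the incidence correspondence to pass between the two descriptions.
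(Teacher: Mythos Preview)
Your core interpretation is correct: the defining equations of $\Sigma$ say that $\nabla D_A(P_\lambda)\in T_p^\perp$, and for a smooth point $P_\lambda\in X_A^\nu$ this is exactly the containment $T_p\subseteq T_{X_A^\nu,P_\lambda}$. However, you miss the one-line observation that makes your entire closure/limit discussion unnecessary. Since $P_\lambda=\sum_i\lambda_i p_i\in T_p$ and (by the $\Sigma$-equations) $\nabla D_A(P_\lambda)\in T_p^\perp$, the pairing $\langle P_\lambda,\nabla D_A(P_\lambda)\rangle$ vanishes; by Euler's relation for the homogeneous polynomial $D_A$ this pairing equals $\delta\cdot D_A(P_\lambda)$, so $D_A(P_\lambda)=0$ automatically. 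Thus $\xi:=P_\lambda$ already lies on $X_A^\nu$, and the tangency statement follows pointwise and directly, with no need for biduality, smoothness hypotheses, or limiting arguments. This is precisely the paper's proof. Your proposed limit arguments are therefore superfluous, and as stated they are also not rigorously justified (for instance, you assert without proof that a generic $(p',\lambda')\in\Sigma$ has $P_{\lambda'}$ smooth on $X_A^\nu$).

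On your final concern --- the possible discrepancy between $V(JD_{r,A})$ and $\pi(\Sigma)$ when the latter has components of higher codimension --- you are right that this is a genuine subtlety given Definition~\ref{def:Sigma}. The paper's proof in fact establishes the equivalence between ``$(p,\lambda)\in\Sigma$ for some $\lambda$'' and ``$T_p$ is tangent to $X_A^\nu$ at some $\xi$'', and tacitly reads the lemma that way; the scheme-theoretic refinement is not addressed there either.
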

\begin{proof} If  $JD_{r,A}(p_0,\ldots,p_r)=0$ then there exists $\lambda$ such that $(p,\lambda)\in\Sigma$; let $\xi=P_\lambda.$  Consider the  equalities
\begin{equation}\label{Euler}
 0 =\sum_j c_{ij}\frac{\partial D_A}{\partial c_j}(P_\lambda), \quad i=0,\ldots,r .\end{equation}
 Note that  these equations equal the derivatives with respect to $\lambda_0, \dots, \lambda_r$ of the composed function $(f, \mu) \to D_A(\sum_{i=0}^r \mu_i f_i)$ at
 the point $(p, \lambda)$.
 The
Euler relation  implies that $D_A(P_\lambda)=0$,
and thus $P_\lambda\in X_A^\nu.$ Moreover~\eqref{Euler}   implies that each $p_i$ lies in $T_{X^\nu,\xi}$, which is equivalent to $T_p\subset T_{X^\nu,\xi}$.  \end{proof}

Recall that we denote by $\textrm{sing}(X_A^\nu)$ the subscheme of $X_A^\nu$ defined by the ideal generated by the partial derivatives of $D_A.$ 

\begin{theorem}\label{th:main} Assume $A \subset \Z^n$ is non-defective and let $r\in\Z$ with $0 \le r \le \dim(X_A).$ 
Then,  the mixed discriminant $MD_{r,A}$  always divides the geometric iterated discriminant $JD_{r,A}$. Moreover:
\begin{enumerate}
\item If ${\rm codim}_{X_A^\nu}(sing(X_A^\nu))> r$, then $JD_{r,A} = MD_{r,A}$.
\item  \label{it:2} If ${\rm codim}_{X_A^\nu}(sing(X_A^\nu))= r$, then $JD_{r,A} = MD_{r,A}  \prod_{i=k}^\ell Ch_{Y_k}^{\mu_k},$
where $Y_1, \dots, Y_\ell$ are the irreducible components of $sing(X_A^\nu)$ of maximal
dimension $r$, with respective multiplicities $\mu_k\ge 2$.
\item If ${\rm codim}_{X_A^\nu}(sing(X_A^\nu)) < r$,
then $\pi(\Sigma)=(\PP^{(r+1)(N+1)-1})^\nu,$ and $JD_{r,A}=0.$
\end{enumerate}
\end{theorem}

\begin{proof} As already observed, in the classical case of $r=0$ we have $ID_{0,A}=MD_{0,A}=D_A.$
Note also that  by Propositions~\ref{prop:wz} and~\ref{prop}, $\deg(MD_{r,A})>0$ and it is irreducible.
 
Observe that the map $\phi$ is surjective since for any $F\in X_A^\nu,$ 
$F=\phi(F,\ldots,F,\frac{1}{(r+1)},\ldots, \frac{1}{(r+1)})$ and  we have $(F,\ldots,F,\frac{1}{(r+1)},\ldots, \frac{1}{(r+1)})\in\Sigma.$  
The rational map $T$ is defined over the open dense subset $U_T=\{p\; : \, T_p \simeq \PP^{r}\}$ of all  $p$ with linear span of projective dimension $r$. 
  Notice also that $T$ is surjective and that for each  $H\in Gr(r+1, N+1)$, the fiber $T^{-1}(H)$ has dimension $(r+1)^2-1.$ Let  
$\Sigma^\circ=\phi^{-1}((X_A^\nu)_{reg})$ and $\Sigma'=\phi^{-1}(sing(X_A^\nu))$; that is, let
$$\Sigma^\circ=\{(p, \lambda)\in \Sigma \, :\, P_\lambda\in (X_A^\nu)_{reg}\}, \quad \Sigma'=\{(p, \lambda)\in \Sigma \, :\, P_\lambda\in sing(X_A^\nu)\}.$$
It follows that $\pi(\Sigma)=\pi(\Sigma^\circ)\cup\pi(\Sigma').$

We claim that $\overline{\pi(\Sigma^\circ)}\subseteq V(MD_{r,A}).$
In fact, take a generic point $(p,\lambda)\in\pi(\Sigma^\circ)$.
We can then assume that not only $P_\lambda\in(X_A^\nu)_{reg}$,
but also there is a unique
regular point $y=(x^{m_0}:\ldots:x^{m_N})\in X_A$ with $x \in (K^*)^n$ such that $P_\lambda(x)=0$ and $\frac{\partial P_\lambda}{\partial x_i}(x)=0, \,\, i=1,\ldots, n.$ 
By Remark \ref{rem},  $y=(\frac{\partial D_A}{\partial c_0}(P_\lambda):\ldots: \frac{\partial D_A}{\partial c_n}(P_\lambda)).$ 
The equations $\frac{\partial P_\lambda}{\partial x_i}(x)=0$ for $i=1,\ldots,n$  mean that   $\sum \lambda_i \nabla(p_i)(x)=0.$ 
Moreover  $p_i(x)=0$ for all $i$ because
\begin{equation}\label{regular}p_i(x)= \sum_j c_{ij} y_j=k\sum_j c_{ij}\frac{\partial D_A}{\partial c_j}(P_\lambda)=0, \quad i=0,\ldots r \end{equation}         
 for some $k\in K^*$ such that  $y=k \, \nabla D_A(P_\lambda).$
This implies that $MD_{r,A}(p_0,\ldots,p_r)=0.$ 

We now show that $V(MD_{r,A})\subseteq \pi(\Sigma).$
Let $(p_0,\ldots,p_r)$ be a generic element in the zero locus of  $MD_{r,A}.$ Then there exists $(u,\lambda)\in (K^*)^{n+r+1}$ such that
\[ p_0(u)=\cdots=p_r(u)=0\text{ and } \begin{bmatrix} \nabla p_0(u)&\cdots&  \nabla p_r(u)\end{bmatrix}\begin{bmatrix}\lambda_0\\ \vdots\\\lambda_r 
 \end{bmatrix}=0.\] We claim that $(p,\lambda)\in\Sigma.$
If  $P_\lambda\in sing(X_A^\nu)$ then $\frac{\partial D_A}{\partial c_j}(P_\lambda)=0$ and thus $(p_0,\ldots,p_r)\in\pi(\Sigma)$. If instead $P_\lambda\in (X_A^\nu)_{reg}$ is generic,
then biduality gives $\nabla D_A(P_\lambda)=y$ with $y=(u^{m_0}:\ldots:u^{m_N})$  and thus $\sum_j c_{ij}\frac{\partial D_A}{\partial c_j}(P_\lambda)=p_i(u)=0$ 
as in \eqref{regular},  implying again that $(p_0,\ldots,p_r)\in\pi(\Sigma).$ We have then proved that
\begin{equation}\label{contained} \overline{\pi(\Sigma^\circ)}\subseteq V(MD_{r,A})\subseteq \pi(\Sigma).\end{equation}

Consider the non-embedded primary components of the ideal $\langle \frac{\partial D_A}{\partial c_j}, j=0,\ldots,N \rangle$ defining the singular locus of $X_A^\nu.$ 
Correspondingly, we consider  the decomposition into irreducible components  $sing(X_A^\nu)= \bigcup Y_k.$  Define
\begin{equation}\label{eq:Vk}
V_k=\{H\in Gr(r+1,N+1)\,: H\cap Y_k\neq \emptyset\}\text{ and } \Sigma_k=\phi^{-1}(Y_k).\end{equation}
Recall that $\codim_{Gr(r+1,N+1)}(V_k)=\max\{0, \codim_{\PP^{N}}(Y_k)-r\}.$ 

 Assume that $\codim_{X_A^\nu}(sing(X_A^\nu))> r$.  Then $\codim_{Gr(r+1,N+1)}(V_k)\geq 2$ for all $k.$ It follows that 
 for all $i$
 $$\codim_{{\PP^{(r+1)(N+1)-1}}^\nu}
 (\pi(\Sigma_i))=\codim_{{\PP^{(r+1)(N+1)-1}}^\nu}(\overline {\pi(\Sigma_i)\cap U_T})\geq \codim_{{\PP^{(r+1)(N+1)-1}}^\nu}(\overline{T^{-1}(V_i)})\geq 2.$$ 
 The containment in Equation  \eqref{contained} then implies that
$\pi(\Sigma)$ is  of codimension one and set-theoretically coincides with $V(MD_{r,A})$.
If $\codim_{X_A^\nu}(sing(X_A^\nu))=r$, then $\codim_{Gr(r+1,N+1)}(V_k)=1$ and thus by definition 
$\pi(\Sigma_k)=V(Ch_{Y_k}^{\mu_k})$ for some integer exponents $\mu_k$. We prove that these multiplicities are at least equal to $2$ in Proposition~\ref{prop:muk} below.

As the mixed discriminant $MD_{r,A}$ is irreducible, it remains to show that the multiplicity of  $MD_{r,A}$ in $ID_{r,A}$ is equal to $1$.  For that, it is enough to show that 
there exists $(p_0^\nu,\ldots,p_r^\nu)\in V(MD_{r,A})$ and $\lambda^\nu$ such that $(p^\nu,\lambda^\nu)\in\Sigma$ and $d\pi ((p^\nu,\lambda^\nu))$ has maximal rank.

We start by choosing a  point  $\xi\in reg(X_A^\nu)$ such that $\textrm{rank}(H)=n+1,$ where  $H=Hess(D_A)(\xi).$ Notice that $H={\rm Jac}(\gamma)(\xi),$ where 
$\gamma:X_A^\nu \dasharrow X_A$ is the Gauss map defined as $\gamma(y)=\nabla D_A(y)$ which in affine coordinates has generic rank equal to $n=\dim(X_A)$.  Up to a change of coordinates, 
$H$ can be assumed to be of the form \begin{equation}\label{hess} H= \begin{bmatrix} I_{n+1}&0\\0&0  \end{bmatrix}. \end{equation}

Consider $Z=\{M\in Gr(r+1, N+1) \,:\, \xi\in M\subset T_{X_A^\nu,\xi} \}$.
Note that for every   $p\in T^{-1}(Z),$  we have $\xi\in T_p\subset T_{X_A^\nu,\xi}$ and $\dim(T_p)=r$  as $p\in U_T$.
 It follows that there exists a unique $\lambda^*$ such that $(p,\lambda^*)\in\Sigma$ and $\phi(p,\lambda^*)=\xi.$ 
Consider the $(r+1)\times (N+1)$ matrix $C_p$ whose $i$-th row corresponds to the coefficients of $p_i$.
 Without loss of generality, the matrix $C_p$ can be assumed to be of the form  $[I_{r+1}, C].$ 

The matrix $M$ of the lifted linear map $\pi:  (\C^{(r+1)(N+1)})^\nu\times \C^{r+1} \to (\C^{(r+1)(N+1)})^\nu$ is equal to
$ M= [I_{(r+1)(N+1)}, 0]$, where $0$ denotes the zero matrix of size $ (r+1)(N+1) \times (r+1)$.  Let us call
 $g_0, \dots g_r$ the defining equations of $\Sigma$ in~\eqref{Sigma}.  It follows that $d\pi (p,\lambda^*)$ is of maximal rank if 
 and only if the square $(r+1)(N+1)+ (r+1)$-matrix $M'$ with upper rows consisting of the Jacobian of $g_0, \dots, g_n$ with respect to the
  variables $(c_{01}, \dots, c_{0N},  \dots, c_{r0}, \dots, c_{rN},  \lambda_0, \dots, \lambda_r)$  evaluated at $(p, \lambda^*)$ 
 and lower rows given by the matrix $M$, has maximal rank. 
 But given the form of $M$, this is equivalent to the fact that the $(r+1)\times(r+1)$ submatrix $H^*$  at the right upper corner of $M'$ has maximal rank $r+1$. 
 Recall from the proof of Lemma~\ref{lem:euler} that $g_0, \dots g_r$  equal  the derivatives with respect to $\lambda_0, \dots, \lambda_r$ 
 of the composed function $(f, \lambda) \to D_A(\sum_{i=0}^r \lambda_i f_i)$. Then, $H^*$ consists of the 
 Hessian matrix with respect to the $\lambda$-variables of this composed function. Therefore, we have that
 \begin{equation}\label{eq:H}
 H^* \, = C_p \, H \, C_p^t.
 \end{equation}
Recall that we assume that 
  $r\leq n$, and thus $r+1 \le n+1$. Given the form of the coefficient matrix $C_p$ and of the Hessian matrix $H$ in~\eqref{hess}, 
  we deduce that $H^*$ is of maximal rank because it is the identity matrix $I_{r+1}$.

Assume that $\codim_{X_A^\nu}(sing(X_A^\nu))< r$. Then $\codim_{\PP^{N}}(Y_k)<r+1$ for all $k.$ The assumption 
also implies that any element of the Grassmannian belongs to $V_k$ for all $k$ (defined in~\eqref{eq:Vk}) and that $\pi(\Sigma')\cap U_T= T^{-1}(Gr(r+1,N+1))=U_T.$ It follows that $\PP^{(r+1)(N+1)-1}
=\overline{U_T}=\overline {\pi(\Sigma_i)\cap U_T}=\pi(\Sigma_i)\subset \pi(\Sigma)$ and thus $\pi(\Sigma)=(\PP^{(r+1)(N+1)-1})^\nu$.
 \end{proof}

The following Proposition~\ref{discr} explains the name geometric iterated discriminant: we show that under the hypotheses  
of Theorem~\ref{th:main}, the polynomial $JD_{r,A}$ in Definition~\ref{def:Sigma}  equals the
polynomial $ID_{r,A}$ in Definition~\ref{def:id}, and thus when it is nonzero it can be 
computed as a discriminant of a discriminant.  

Recall that, given a natural number $d$, we denote by  $d\Delta_{r}$ in $\R^{r+1}$  the lattice configuration given by the integer points in the dilated unit simplex $d$ times, and by 
$D_{d\Delta_{r}}$ the associated discriminant. For any homogeneous polynomial $H= H(\lambda_0, \dots, \lambda_r)$ of degree $d$, 
the discriminant of $H$ equals, up to constant, the resultant of its partial derivatives:
\begin{equation}\label{eq:D=Res}
D_{d\Delta_{r}}(H)  \, = \, {\rm Res}_{d-1}\left(\frac{\partial H}{\partial \lambda_0},
\dots, \frac{\partial H}{\partial \lambda_r}\right),
\end{equation}
where ${\rm Res}_{d-1}$ denotes the homogeneous resultant associated to $r+1$ homogeneous polynomials of degree $d-1$ (see Prop.~1.7, Ch.~13 in~\cite{GKZ}). 
Moreover, the following universal property is proved  in~\cite{Jou} (see Theorem~3.8 in \cite{Buse} for an English concise version). Let $G_0, \dots, G_r \in \Z[u][\lambda_0, \dots, \lambda_r]$ have degree $d-1$ with generic coefficients $u$:
\[ G_i (u, \lambda) = \sum_{|\alpha|=d_i} \, u_{i, \alpha} \, \lambda^\alpha.\]
Denote by $I_G$ the ideal $\langle G_0, \dots, G_r \rangle \subset \Z[u][\lambda_0, \dots, \lambda_r]$ generated by $G_0, \dots, G_r$. Then,
${\rm Res}(G_0, \dots, G_r)$ is a generator of the (generic) projective elimination ideal
\begin{equation} \label{eq:piG} 
\pi I_G = (I_G: m^\infty) \cap \Z[u]. \end{equation} 
In particular. 
for any variable $\lambda_i$ and any $N > \sum_i d_i -n$, it holds that
\begin{equation}\label{eq:Resideal}
\lambda_i^N  \, {\rm Res}_{d-1}(G_0, \dots, G_r) \in \langle G_0, \dots, G_r \rangle.\end{equation}
  Thus, such an equality holds for any specialization of the coefficients $u$ in a ring.

Let $A$ be a non-defective configuration with
$\codim(sing(X_A^\nu))\geq r$.  Call $\delta= \deg(D_A)$ and for a choice of $A$-polynomials $p_0, \dots, p_r$ 
consider the evaluation $D_A(P_\lambda)=D_A(\sum_{i=0}^r \lambda_i p_i)$, which is either zero or a homogeneous
polynomial in $\lambda=(\lambda_0, \dots, \lambda_r)$ of degree $\delta$.

\begin{proposition}\label{discr}
Under the hypotheses of Theorem~\ref{th:main}, 
the following equality holds:
$$ JD_{r,A} = ID_{r,A}.$$
Moreover, when ${\rm codim}_{X_A^\nu}(sing(X_A^\nu)) \ge r$, the degree of the iterated discriminant 
equals
\begin{equation}
\label{deg}\deg(ID_{r,A}) = (r+1)\delta (\delta -1)^{r}.
\end{equation}
\end{proposition}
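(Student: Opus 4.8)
The plan is to prove the two assertions separately: first the identity $JD_{r,A} = ID_{r,A}$, and then the degree formula \eqref{deg}.

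For the identity, I would argue that both $JD_{r,A}$ and $ID_{r,A}$ are, up to a nonzero constant, generators of the codimension-one part of the same elimination ideal, so they must agree (up to scalar, and by the sign/normalization conventions for discriminants, exactly). Concretely, recall from Definition~\ref{def:id} that $ID_{r,A}(p_0,\dots,p_r) = D_{\delta\Delta_r}(D_A(P_\lambda))$ when $d \ge r+1$ (and $0$ otherwise), while $JD_{r,A}$ is built from the ideal $I = \langle \sum_j c_{ij}\,\partial D_A/\partial c_j(P_\lambda),\ i=0,\dots,r\rangle$ by saturating and intersecting with $\C[c]$. The key observation is that the $r+1$ generators of $I$ are, by the chain rule, exactly the partial derivatives $\partial_{\lambda_i}\big(D_A(P_\lambda)\big)$: since $P_\lambda = \sum_k \lambda_k p_k$, we have $\partial_{\lambda_i} c_\ell(P_\lambda) = c_{i\ell}$, so $\partial_{\lambda_i}\big(D_A(P_\lambda)\big) = \sum_j c_{ij}\,\frac{\partial D_A}{\partial c_j}(P_\lambda)$. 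Thus $I = \langle \partial_{\lambda_0} H, \dots, \partial_{\lambda_r} H\rangle$ where $H = D_A(P_\lambda)$ is homogeneous of degree $\delta$ in $\lambda$. By the formula \eqref{eq:D=Res} expressing the discriminant of a form as the resultant of its partials, together with the universal property \eqref{eq:Resideal} which places a power $\lambda_i^N \cdot {\rm Res}_{\delta-1}(\partial_\lambda H)$ inside the ideal $\langle \partial_\lambda H\rangle$ for generic coefficients, we deduce that $D_{\delta\Delta_r}(H) = {\rm Res}_{\delta-1}(\partial_{\lambda_0}H,\dots,\partial_{\lambda_r}H)$ lies in the saturation of $I$ with respect to $\lambda$, hence (since it is a polynomial in $c$ alone) in $\pi I$. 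Conversely, the projection $\pi(\Sigma)$ is precisely the locus where the system $\partial_{\lambda_i}H = 0$ has a common solution in $\PP^r$, which is exactly the vanishing of the $\lambda$-resultant, i.e.\ of $D_{\delta\Delta_r}(H)$. Hence $V(ID_{r,A})$ and $\pi(\Sigma)$ have the same codimension-one part, and comparing generators gives $JD_{r,A} = ID_{r,A}$ up to a nonzero constant, which one pins down by evaluating on a configuration where both sides are known (e.g.\ the $r=0$ case, where both equal $D_A$, or a simplex, where both equal the Schläfli hyperdeterminant expression). The hypothesis ${\rm codim}(sing(X_A^\nu)) \ge r$ is what guarantees, via Theorem~\ref{th:main}, that $\pi(\Sigma)$ is a genuine hypersurface (rather than all of projective space), so that this comparison is meaningful; when the codimension is $< r$ both sides are $0$ by definition and by Theorem~\ref{th:main}(3).

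For the degree formula, I would compute $\deg D_{\delta\Delta_r}(D_A(P_\lambda))$ directly. The univariate-in-spirit fact is that the discriminant of a generic homogeneous form of degree $\delta$ in $r+1$ variables has degree $(r+1)(\delta-1)^r$ in its coefficients — this is the classical degree of the discriminant $D_{\delta\Delta_r}$, recorded in \cite{GKZ}. Now $D_A(P_\lambda)$ is a form of degree $\delta$ in $\lambda$ whose coefficients are polynomials in $c$; more precisely, each coefficient of $\lambda^\alpha$ in $D_A(\sum \lambda_i p_i)$ is homogeneous of degree $\delta$ in the combined coefficient vector $(c_{ij})$, since $D_A$ itself is homogeneous of degree $\delta$ and is being evaluated at a linear form in the $c$'s. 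Substituting a degree-$\delta$-in-$c$ expression into each of the $(r+1)(\delta-1)^r$ coefficient-slots of the discriminant multiplies the degree by $\delta$, giving $\deg(ID_{r,A}) = \delta \cdot (r+1)(\delta-1)^r = (r+1)\,\delta\,(\delta-1)^r$, which is \eqref{deg}. I would add a one-line check that this substitution does not cause the discriminant to vanish identically (so the degree is actually attained): this is exactly the content of ${\rm codim}(sing(X_A^\nu)) \ge r$ via Theorem~\ref{th:main}, since in that range $JD_{r,A} = ID_{r,A}$ is a nonzero polynomial (it has $MD_{r,A}$ as a factor, and $MD_{r,A}$ is a nontrivial irreducible polynomial by Propositions~\ref{prop:wz} and~\ref{prop}).

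The main obstacle I anticipate is the bookkeeping in the first part: carefully justifying that the saturation/elimination defining $JD_{r,A}$ produces exactly the resultant of the $\partial_{\lambda_i}H$ with the correct reduced structure, rather than some power or some extra component. The universal property \eqref{eq:Resideal} is the right tool — it shows the resultant lies in the relevant ideal after multiplying by a power of $\lambda_i$, hence survives saturation — but one must also argue the reverse containment (that nothing of codimension one in $\pi I$ lies outside $V(D_{\delta\Delta_r}(H))$), which follows because $\pi(\Sigma)$ set-theoretically is the image of the projection, and a point $p$ lies in that image iff the $\lambda$-system has a projective solution, iff the resultant vanishes. The degree computation, by contrast, should be routine once one invokes the standard degree of $D_{\delta\Delta_r}$ from \cite{GKZ} and observes the homogeneity of $D_A(P_\lambda)$ in $c$.
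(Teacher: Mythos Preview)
Your proposal is correct and follows essentially the same route as the paper: the chain-rule identification $\sum_j c_{ij}\,\partial D_A/\partial c_j(P_\lambda) = \partial_{\lambda_i}\big(D_A(P_\lambda)\big)$, the use of~\eqref{eq:D=Res} and the universal property~\eqref{eq:Resideal} from \cite{Jou} to match the elimination ideal with the vanishing of $D_{\delta\Delta_r}(D_A(P_\lambda))$, and the direct degree count. Your degree computation is in fact stated more carefully than the paper's, which records $\deg(D_{\delta\Delta_r}) = (r+1)\delta^r$ where the classical formula is $(r+1)(\delta-1)^r$; your version gives the intended $(r+1)\delta(\delta-1)^r$ without that slip.
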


\begin{proof}
By Theorem~\ref{th:main} and Definition~\ref{def:id}, we can assume that $ {\rm codim}_{X_A^\nu}(sing(X_A^\nu)) \ge r$.

Let $(p_0^0, \dots, p_r^0,\lambda^0)$ be a point in the incidence variety $\Sigma$ defined in~\eqref{Sigma}.
Note that  for any $i=0, \dots,r$,  we have that
\[0 \, = \, \sum_j c_{ij}\frac{\partial D_A}{\partial c_j}(\sum_{i=0}^r \lambda_i^0 p_i^0) \, 
= \frac{\partial}{\partial \lambda_i} D_A(\sum_{i=0}^r \lambda_i p_i^0)(\lambda^0). \] 
Then, $ID_{r,A} =D_{\delta\Delta_{r}}(D_A(\sum_{i=0}^r \lambda_i p_i^0))=0.$ As we pointed out in~\eqref{eq:D=Res}, this homogeneous discriminant equals, up to constant, the resultant
$${\rm Res}_{\delta -1}\left(\frac{\partial}{\partial \lambda_0} D_A(\sum_{i=0}^r \lambda_i p_i^0), \dots, \frac{\partial}{\partial \lambda_r} D_A(\sum_{i=0}^r \lambda_i p_i^0)\right).$$
It then follows that if $D_{\delta\Delta_{r}}(D_A(\sum_{i=0}^r \lambda_i p_i^0))=0$, then there exists $\lambda^0 \in \PP^r$ which is a common zero of all these partial derivatives.
We deduce from Equation~\eqref{eq:Resideal} that   for any ring $R$ containing the coefficients of $D_A(P_\lambda)$ and for any $i=0, \dots, r$, 
the iterated discriminant $D_{\delta \Delta_r}(P_\lambda)$ lies in the ideal generated by the partial derivatives
$\frac{\partial}{\partial \lambda_j} D_A(\sum_{i=0}^r \lambda_i p_i^0),  j=0, \dots, r, $ in each localization $R[\lambda_0, \dots, \lambda_r]_{\lambda_i}$. Moreover, we have that the ideal $\pi I$ in~\eqref{eq:pi} is the specialization of the ideal
$\pi I_G$ in~\eqref{eq:piG}. Then, $ID_{r,A} = JD_{r,A}$, as claimed.

To see that Equation~\eqref{deg} holds, recall that  $\deg(D_A)=\delta$ and so the degree of 
$D_A( \sum_{i=0}^r \lambda_i p_i)$ in the coefficients of $p_0, \dots, p_r$ as well as in the $\lambda$ variables  is equal to $\delta$.  On the other side,
the degree of $D_{\delta\Delta_{r}}$ is equal to
$(r+1) \delta^r$. 
\end{proof}

\subsection{The exponents in Theorem~\ref{th:main}} \label{ssec:exponents}

We first present two examples that illustrate Theorem~\ref{th:main} with $r+1=2.$ In the first one, the singular locus has codimension $r+1=2$, which implies a factor 
(with multiplicity $2$) of the iterated discriminant. In the second one, the singular locus has codimension bigger than $2$, which implies equality  between $MD_{1,A}$ and $ID_{2,A}.$

\begin{example} \label{ex:12c}[Example~\ref{ex:12}, continued.]
{\rm Consider again the two dimensional configuration  corresponding to the first Hirzebruch surface ${\mathbb F}_1$:
$$A = \{ (0,0), (1,0), (2,0), (0,1), (1,1)\}.$$
Given a generic $A$-polynomial 
$f(x, y) = a_0 + a_1 x + a_2 x^2 + y (b_0 + b_1 x)$, we saw that 
$D_A(f) \, = a_0 b_1 ^2 - a_1 b_0 b_1 + a_2 b_0^2$.
The ideal defining the singular locus $S$ of $X_A^\nu$ is generated by
\[b_0 ^2, b_0 b_1, b_1^2, -a_1 b_1 + 2 a_2 b_0, 2 a_0 b_1-a_1, b_0.\]
This ideal has multiplicity $2$ and its radical is generated by $b_0, b_1$.
In this case, 
$ID_{2,A}$ has another irreducible factor $Ch_S$ of degree $2$ coming from the Chow form of $S$, to the second power:
\[ ID_{2,A} = MD_{1,A} \, \cdot \, Ch_S^2,\]
where $Ch_S ((a_0, a_1,a_2, b_0, b_1), (A_0, A_1, A_2, B_0, B_1)) = B_0 b_1- B_1 b_0$.}
\end{example}

\begin{example}\label{ex:1111}
{\rm
Let $X_A$  be the Segre embedding of $\PP^1 \times \PP^1 \times\PP^1$, so $D_A$ is the hyperdeterminant of format 
$(2,2,2)$ of degree $4$,  whose singular locus has codimension greater than $2$ by~\cite{WZ}. 
Take $r=2$, so that  $MD_{1,A}$ equals the discriminant of the hyperdeterminant of format $(2,2,2,2)$ 
(corresponding to the Segre embedding of $\PP^1 \times \PP^1 \times \PP^1 \times \PP^1$).
In this case, $MD_{1,A}$ equals the iterated discriminant $ID_{2,A}$ and thus has degree
$2\cdot 4\cdot (4-1)^1 =24$. 

This is the only known case of polynomials of degree bigger than $2$  for which the iterated and
the mixed discriminants coincide.}
\end{example}

We have not completely identified  the exponents $\mu_k$ occurring in the factorization of the iterated discriminant in Theorem~\ref{th:main} by 
the difficulties expressed in the Introduction, where we gave the only references to the literature we are aware of, but the following proposition shows that these exponents are strictly bigger than $1$.

\begin{proposition} \label{prop:muk} With notation and assumptions as in Theorem~\ref{th:main}, the exponents $\mu_k$ in item~\eqref{it:2} satisfy $\mu_k\geq 2$ for all $k$. 
\end{proposition}

\begin{proof} To simplify the notation, assume that ${\rm codim}_{X_A^\nu}(sing(X_A^\nu))= r=1$, $sing(X_A^\nu)=Y$ and  $JD_{r,A} = MD_{r,A}  Ch_{Y}^{\mu}$. 
Considering a generic point $(p_0,p_1)$ such that $MD(p_0,p_1)\neq 0$ and $Ch_{Y}(p_0,p_1)=0$ we have that:
$$\frac{\partial ID}{\partial c_{i,a}}(p_0,p_1)= \mu Ch_{Y}^{\mu-1} (p_0,p_1) MD_{1,A}(p_0,p_1)
 \frac{\partial Ch_{Y}}{\partial c_{i,a}}(p_0,p_1) + Ch_{Y}^{\mu}(p_0,p_1)\frac{\partial MD}{\partial c_{i,a}}(p_0,p_1)$$ for all $a\in A$ and $i=0,1.$
By genericity we may assume that $\frac{\partial Ch_{Y}}{\partial c_{i,a}}(p_0,p_1)\neq 0$ for some $c_{i,a}.$
We can conclude that if $\mu=1$ then $\frac{\partial ID}{\partial c_{i,a}}(p_0,p_1)\neq 0$ for some $c_{i,a}$ 
and thus that $\frac{\partial ID}{\partial c_{i,a}}(p_0,p_1)=0$ for all $c_{i,a}$ would imply $\mu\geq 2$.
We will prove that this is the case.

Recall that $Ch_{Y}(p_0,p_1)=0$ implies that the line spanned by $p_0,p_1$ intersects $Y$ at some point  which we denote by $p_0+\lambda^*p_1.$ 
This means that $D_A(p_0+\lambda^*p_1)=\frac{\partial D_A}{\partial c_{a}}(p_0+\lambda^*p_1)=0$ for all $a\in A.$
Let $D_A(p_0+\lambda p_1) =\sum_0^\delta \Gamma_j(c_{0,a}, c_{1,a})_{a\in A} \lambda^j$  and recall that $ID_{r,A}(p_0, \dots, p_r):=D_{\delta}(D_A(p_0+\lambda p_1))$ by Theorem \ref{discr}. It follows that:
$$\frac{\partial ID}{\partial c_{i,a}}(p_0,p_1)=\sum_1^\delta (\frac{\partial D_\delta}{\partial \Gamma_j}(D_A(p_0+\lambda^*p_1))\frac{\partial \Gamma_j}{\partial c_{i,a}}(p_0,p_1).$$

Observe that:
$$\frac{\partial D_A(p_0+\lambda^*p_1)}{\partial c_{0,a}}=\frac{\partial D_A}{\partial c_{a}}(p_0+\lambda^*p_1)=0, 
\frac{\partial D_A(p_0+\lambda^*p_1)}{\partial c_{1,a}}=\lambda^*\frac{\partial D_A}{\partial c_{a}}(p_0+\lambda^*p_1)=0.$$
Moreover:
$$0=\frac{\partial D_A(p_0+\lambda^*p_1)}{\partial c_{i,a}}=\sum_{j=0}^\delta  \frac{\partial \Gamma_j}{\partial c_{i,a}}(p_0,p_1)(\lambda^*)^j.$$

Recall that if $\frac{\partial D_\delta}{\partial \Gamma_j}(D_A(p_0+\lambda^*p_1)\neq 0$ for some $j$ then by biduality 
$\frac{\partial D_\delta}{\partial \Gamma_j}(D_A(p_0+\lambda^*p_1))=(\lambda^*)^j,$ which would conclude the proof.
If otherwise $\frac{\partial D_\delta}{\partial \Gamma_j}(D_A(p_0+\lambda^*p_1))=0$ for all $j$ then the assertion is also true.
  \end{proof}


Iterated discriminants with respect to one variable appear frequently in the study and applications of the Cylindrical Algebraic Decomposition proposed
 by Collins in 1975, and this lead to try to describe the singularities of discriminant hypersurfaces. In particular, 
the best detailed study is done by Bus\'e and Mourrain in Theorem~6.8 and Corollary~6.9 in~\cite{BM} for homogeneous polynomials of three variables 
(or more, but iterating twice the computation of a discriminant with respect to one of the variables) using resultants, with proofs that cannot be extended for general configurations $A$.

An interesting subsequent work is the paper by Lazard and McCallum~\cite{Lazard}. Again, they consider polynomials $f$ in variables $(x, y, z_1, \dots, z_m)$ 
and univariate iterated discriminants in $x$ and $y$ (thinking of $f$ in the ring $k[z_1,\dots, z_m][x,y]$) with rather elementary techniques. 
They identify the factors of their iterated discriminants but don't identify the exponents in general. However, they prove a series of very nice general and useful results,
 in particular Proposition~9 about the regular points of the discriminant (which is a version of biduality), and Propositions~10 through~14 about the singular points, 
 that could be used to identify the exponents $\mu_k$ in particular cases. 

 Based on the computation of different examples (see for instance Examples~\ref{ex:12} and
Example~\ref{ex:12c}), and the results in~\cite{BM} and~\cite{Lazard} that we mentioned, we see some evidence of the following. 

\begin{conjecture} \label{conj:muk}
The multiplicity $\mu_k$ are equal to $2$
 if $Y_k$ is a component of codimension one corresponding to the closure of the locus of  those $p$ for which there are  two different
non-degenerate multiple roots (the double point locus), while $\mu_k$ equals $3$ when $Y_k$ is a component of codimension one corresponding to the 
locus of those $p$  for which there is a degenerate multiple root (the cusp locus). 
\end{conjecture}

\section{Comparing mixed and iterated discriminants}\label{section:comparing}

In this section we consider the case when $A$ equals the lattice points in a cartesian product of dilates of standard simplices: $d_1 \Delta_{k_1}
\times\cdots\times d_\ell \Delta_{k_l}$, for some $\ell \ge 1$. In other words we investigate Segre-Veronese varieties $X_A= \mathbb{P}^{k_1}(d_1) \times\cdots\times \mathbb{P}^{k_\ell}(d_\ell).$

The symbol $\mathbb{P}^{k}(d)$ denotes  the Veronese embedding of degree $d$ in dimension $k$, i.e. the variety $\mathbb{P}^k$ 
embedded in $\mathbb{P}^{\binom{k+d}{d}-1}$ by the global sections of the line bundle ${\mathcal O}_{{\mathbb P}^k}(d).$ We occasionally denote $\mathbb{P}^{k}(1)$ by $\mathbb{P}^{k}.$

The symbol $\mathbb{P}^{k_1}(d_1) \times\cdots\times \mathbb{P}^{k_\ell}(d_\ell)$ denotes the Segre embedding of the above defined 
Veronese embeddings, more precisely the variety  $\mathbb{P}^{k_1} \times\cdots\times \mathbb{P}^{k_\ell}$ embedded  via the global
 sections of the line bundle $\pi^*_1 {\mathcal O}_{{\mathbb P}^{k_1}}(d_1)\otimes\ldots\otimes\pi_\ell^*{\mathcal O}_{{\mathbb P}^{k_{\ell}}}(d_\ell),$ where $\pi_i$ 
 denotes the $i^{th}$ projection $\pi_i:\mathbb{P}^{k_1} \times\cdots\times \mathbb{P}^{k_\ell}\to \mathbb{P}^{k_i}. $ These  are toric embeddings 
 corresponding to the configurations of lattice points of the polytopes $d_1\Delta_{k_1}\times\ldots\times d_l\Delta_{k_\ell}.$

When  $d_i=1$ we recover the case
of hyperdeterminants, which has been completely solved in~\cite{WZ}. In Proposition~\ref{proposition:r1d2} we show
that when $\ell=1$ there is equality if and only if $r=1$ and $d_1=2$. We then conjecture that these are all the possible
cases (see Conjecture~\ref{conjecture:main}), that is, in all other cases the singularities of the discriminantal locus have codimension
one in the dual variety. We conclude with Theorem~\ref{theorem:d_i>1}, which covers the case in which all $d_i > 1$.

To determine when the iterated and mixed discriminants of Segre-Veronese varieties are equal, we start with the following lemma,
which allows us to compute the degree of $MD_{r,d\Delta_n}$, that is, the case in which we consider $(r+1)$ polynomials of degree $d$ in $n$ variables. 
Recall that when $r \le n$, we know by Proposition~\ref{prop} that the mixed discriminant equals the discriminant of the Cayley configuration
given by the lattice points in the product of simplices $\Delta_r\times d\Delta_n$.

\begin{lemma}\label{lemma:mddeg} 
If $r \le n$ then $\deg(MD_{r,d\Delta_n})=(n+1)\binom{n}{r} d^r(d-1)^{n-r}$.
\end{lemma}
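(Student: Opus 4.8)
The plan is to identify $MD_{r,d\Delta_n}$ with the $A$-discriminant of a Segre--Veronese variety and then evaluate the degree of the corresponding dual variety by a Chern class computation on $\PP^r\times\PP^n$. By Propositions~\ref{prop:wz} and~\ref{prop}, for $r\le n$ (and $d\ge 2$, so that $\PP^n(d)$ is non-defective) the Cayley configuration $C=\Delta_r\times d\Delta_n$ is non-defective and $MD_{r,d\Delta_n}=D_C$, the defining equation of $X_C^\nu$, where $X_C=\PP^r\times\PP^n(d)$ is the smooth toric variety of dimension $N=n+r$ obtained by embedding $\PP^r\times\PP^n$ via the very ample line bundle $\mathcal{L}=\pi_1^*\mathcal{O}_{\PP^r}(1)\otimes\pi_2^*\mathcal{O}_{\PP^n}(d)$. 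Thus it suffices to show $\deg(X_C^\nu)=(n+1)\binom{n}{r}d^r(d-1)^{n-r}$. (The degenerate case $d=1$ is immediate: if $r<n$ the configuration is defective and both sides vanish, while if $r=n$ then $MD_{n,\Delta_n}$ is the resultant of $n+1$ affine-linear forms in $n$ variables, i.e.\ the determinant of an $(n+1)\times(n+1)$ matrix, of degree $n+1$, again matching the formula.)

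To compute $\deg(X_C^\nu)$ I would invoke the classical formula for the degree of the dual variety of a smooth projective variety whose dual is a hypersurface (see \cite{GKZ}), namely $\deg(X_C^\nu)=\int_{X_C}c_N(\mathcal{P}^1_{X_C}(\mathcal{L}))$, where $\mathcal{P}^1_{X_C}(\mathcal{L})$ is the bundle of first principal parts, fitting in the exact sequence $0\to\Omega^1_{X_C}\otimes\mathcal{L}\to\mathcal{P}^1_{X_C}(\mathcal{L})\to\mathcal{L}\to 0$, so that $c(\mathcal{P}^1_{X_C}(\mathcal{L}))=c(\mathcal{L})\,c(\Omega^1_{X_C}\otimes\mathcal{L})$. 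Writing $h_1=\pi_1^*c_1(\mathcal{O}_{\PP^r}(1))$ and $h_2=\pi_2^*c_1(\mathcal{O}_{\PP^n}(1))$, so that $H^*(X_C)=\Z[h_1,h_2]/(h_1^{r+1},h_2^{n+1})$ and $c_1(\mathcal{L})=h_1+dh_2$, and using $\Omega^1_{X_C}=\pi_1^*\Omega^1_{\PP^r}\oplus\pi_2^*\Omega^1_{\PP^n}$ together with the Euler sequences (which give $c(\Omega^1_{\PP^m}\otimes\mathcal{L})\cdot c(\mathcal{L})=c(\mathcal{O}_{\PP^m}(-1)\otimes\mathcal{L})^{m+1}$ for $m=r$ and $m=n$), one obtains
$$c\bigl(\mathcal{P}^1_{X_C}(\mathcal{L})\bigr)=\frac{(1+dh_2)^{r+1}\,(1+h_1+(d-1)h_2)^{n+1}}{1+h_1+dh_2}.$$
Then $\deg(X_C^\nu)$ is the coefficient of $h_1^rh_2^n$ in this expression.

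The final step is the extraction of that coefficient. I would rewrite $1+h_1+(d-1)h_2=(1+h_1+dh_2)-h_2$ and use $h_2^{n+1}=0$ to carry out the division, obtaining $c(\mathcal{P}^1_{X_C}(\mathcal{L}))=(1+dh_2)^{r+1}\sum_{k=0}^{n}\binom{n+1}{k}(-1)^kh_2^k(1+h_1+dh_2)^{n-k}$. Since $h_1^r$ can only come from $(1+h_1+dh_2)^{n-k}$, this forces $k\le n-r$; matching $h_2$-degrees then fixes the total power of $d$ as $d^{n-k}$, and the two admissible contributions from $(1+dh_2)^{r+1}$ combine via $(r+1)\binom{n-k}{r}+(n-k-r)\binom{n-k}{r}=(n-k+1)\binom{n-k}{r}$. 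Using the identities $\binom{n+1}{n-k+1}(n-k+1)=(n+1)\binom{n}{n-k}$ and $\binom{n}{n-k}\binom{n-k}{r}=\binom{n}{r}\binom{n-r}{n-k-r}$, together with the substitution $j=n-r-k$, the sum collapses to
$$\deg(X_C^\nu)=(n+1)\binom{n}{r}d^r\sum_{j=0}^{n-r}(-1)^{n-r-j}\binom{n-r}{j}d^j=(n+1)\binom{n}{r}d^r(d-1)^{n-r},$$
as claimed. The only genuinely laborious part is this binomial bookkeeping; everything else is a direct application of known results, and the main conceptual point is simply the reduction to the dual variety of $\PP^r\times\PP^n(d)$ and the principal-parts formula.
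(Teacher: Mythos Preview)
Your proof is correct but takes a genuinely different route from the paper's. The paper invokes \cite[Ch.~13, Theorem~2.4]{GKZ}, which gives the degree of the dual of a product of projective spaces as the coefficient of $x^ry^n$ in the generating function
\[
S(x,y)=\frac{1}{\bigl((1+x)(1+y)-x(1+y)-dy(1+x)\bigr)^2}=\frac{1}{(1-(d-1)y-dxy)^2},
\]
and then extracts that coefficient by writing $S(x,y)=\sum_{n\ge 0}(n+1)q^n$ with $q=y((d-1)+dx)$ and expanding binomially. This is a three-line computation once the GKZ formula is granted.

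Your approach instead goes back to first principles: you identify $X_C=\PP^r\times\PP^n(d)$ and compute $\deg(X_C^\nu)=\int_{X_C}c_{r+n}(\mathcal{P}^1(\mathcal{L}))$ directly via the Euler sequences on the two factors, arriving at the closed form $c(\mathcal{P}^1(\mathcal{L}))=(1+dh_2)^{r+1}(1+h_1+(d-1)h_2)^{n+1}/(1+h_1+dh_2)$ and then extracting the $h_1^rh_2^n$ coefficient by hand. This is essentially a derivation, in this special case, of the very formula the paper quotes as a black box. What you gain is self-containment and a clearer geometric picture of where the factor $(d-1)^{n-r}$ comes from (namely the twist $\Omega^1_{\PP^n}\otimes\mathcal{L}$); what you pay is the longer binomial bookkeeping at the end. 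Both approaches are valid, and your handling of the degenerate case $d=1$ is a nice bonus that the paper does not need since the lemma is only applied with $d>1$.
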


\begin{proof}  We will use  \cite[Ch. 13, Theorem 2.4]{GKZ}, 
which tells us that this degree is equal to the coefficient of the monomial $x^ry^n$ in the expansion of 
$$S(x,y)=\frac{1}{\left((1+x)(1+y)-x(1+y)-d y(1+x)\right)^2}=\frac{1}{(1-(d-1)y-dxy)^2}. $$
We may write
$$S(x,y)=\left(\frac 1{1-q}\right)^2= \sum_{n \ge 0} (n+1) q^n, $$
where   $q = (d-1)y+dxy = y ((d-1)+d x)$.

Since
\[ q^n = \left(\sum_{j=0}^n \binom{n}{j} (d-1)^{n-j} d^j x^j\right)y^n, \]
we have
\begin{equation}\label{s} S(x,y) \, = \, \sum_{n\ge 0}
 \sum_{j=0}^n (n+1)\binom{n}{j} (d-1)^{n-j} d^j x^j y^n.\end{equation}
From this expansion, we see that the coefficient of $x^r y^n$ is equal to
\[(n+1) \binom{n}{r} (d-1)^{n-r} d^r\]
when $r\leq n$, and  is equal to $0$ if $r > n$.  This completes the proof.

\end{proof}

\begin{proposition}\label{proposition:r1d2}  Let $1<d$ and $1\leq r\leq n$.  
Then $MD_{r,d\Delta_n}=ID_{r,d\Delta_n}$ if and only if $r=1$ and $d=2$.
\end{proposition}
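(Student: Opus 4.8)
The plan is to reduce the claimed equivalence to a single numerical identity between the explicit degree formulas already available, and then to solve that identity. Write $A=d\Delta_n$, so that $X_A=\mathbb{P}^n(d)$ is non-defective for $d>1$ and of dimension $n\ge r$, and $\delta:=\deg D_A=(n+1)(d-1)^n$ (the classical degree of the discriminant of a degree-$d$ form in $n+1$ variables, which is also Lemma~\ref{lemma:mddeg} with $r=0$, since $MD_{0,A}=D_A$). By Lemma~\ref{lemma:mddeg}, $\deg MD_{r,A}=(n+1)\binom{n}{r}d^{r}(d-1)^{n-r}>0$, so $MD_{r,A}$ is a nonconstant polynomial. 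By Theorem~\ref{th:main} (identifying $JD_{r,A}$ with $ID_{r,A}$ via Proposition~\ref{discr}), $MD_{r,A}$ always divides $ID_{r,A}$, and running through its three cases shows that $MD_{r,A}=ID_{r,A}$ holds exactly when $\codim_{X_A^\nu}(\mathrm{sing}(X_A^\nu))>r$: when this codimension equals $r$ the extra factors $\prod_k Ch_{Y_k}^{\mu_k}$ are nonconstant, and when it is smaller $ID_{r,A}$ vanishes while $MD_{r,A}$ does not. Finally, whenever $\codim_{X_A^\nu}(\mathrm{sing}(X_A^\nu))\ge r$ — and in particular in the equality case — Equation~\eqref{deg} gives $\deg ID_{r,A}=(r+1)\delta(\delta-1)^{r}$.

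For the ``only if'' direction I would begin from $ID_{r,A}=MD_{r,A}$, which forces $(r+1)\delta(\delta-1)^{r}=(n+1)\binom{n}{r}d^{r}(d-1)^{n-r}$. Substituting $\delta=(n+1)(d-1)^{n}$ and cancelling the common factor $n+1$ and then $(d-1)^{n-r}$ (legitimate since $d-1\ge 1$ and $n-r\ge 0$), this becomes
\[ \binom{n}{r}\,d^{r}\;=\;(r+1)\Bigl[(d-1)\bigl((n+1)(d-1)^{n}-1\bigr)\Bigr]^{r}. \]
The crux is the elementary claim that, for $d>1$ and $1\le r\le n$, this identity holds if and only if $r=1$ and $d=2$. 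I would prove it by a size estimate, splitting into $d=2$ and $d\ge 3$. When $d=2$ the right-hand side is $(r+1)n^{r}$ and the left-hand side is $\binom{n}{r}2^{r}$; since $\binom{n}{r}<n^{r}/r!$ for $r\ge 2$ and $2^{r}/r!\le 2$ for $r\ge 2$, the left-hand side is strictly smaller than $(r+1)n^{r}$ once $r\ge 2$, while for $r=1$ both sides equal $2n$. When $d\ge 3$ one bounds $(d-1)\bigl((n+1)(d-1)^{n}-1\bigr)\ge n(d-1)^{n+1}$ and uses $(d-1)^{n+1}\ge(d-1)^{2}>d$ (because $d^{2}-3d+1>0$ for $d\ge 3$) together with $\binom{n}{r}\le n^{r}$ to see that the right-hand side strictly exceeds the left-hand side, so the identity has no solution. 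Hence the identity holds only for $(r,d)=(1,2)$, and so $MD_{r,d\Delta_n}=ID_{r,d\Delta_n}$ forces $r=1$ and $d=2$.

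For the ``if'' direction I would set $r=1$ and $d=2$, so $\delta=n+1$ and a direct computation gives $\deg ID_{1,2\Delta_n}=2n(n+1)=\deg MD_{1,2\Delta_n}$ whenever Equation~\eqref{deg} applies; it would then remain to check $\codim_{X_A^\nu}(\mathrm{sing}(X_A^\nu))>1$. This codimension is $\ge 1$ because $X_A^\nu$ is an irreducible variety, hence smooth at its generic point; and it cannot equal $1$, since in that case Theorem~\ref{th:main}(2) would give $\deg ID_{1,2\Delta_n}>\deg MD_{1,2\Delta_n}$, contradicting the degree equality just noted (Equation~\eqref{deg} applies as $\codim=1\ge r$). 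Therefore this codimension is at least $2>1=r$ and Theorem~\ref{th:main}(1) yields $ID_{1,2\Delta_n}=MD_{1,2\Delta_n}$. (Concretely, for $A=2\Delta_n$ the discriminant $D_A$ is the determinant of a generic symmetric $(n+1)\times(n+1)$ matrix, and the locus of such matrices of corank $\ge 2$ has codimension $3$ in the ambient space, hence codimension $2$ in $X_A^\nu=\{D_A=0\}$; this recovers the case of two quadrics of \cite[Theorem~8.2]{O}.)

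The main obstacle I anticipate is the uniform size estimate for the displayed identity: a crude bound such as $\binom{n}{r}\le 2^{n}$ is useless for $n$ large with $r$ fixed, so one must work with $\binom{n}{r}\le n^{r}/r!$ and exploit that the $(d-1)$-power appearing on its right-hand side grows faster in $d$ than $d^{r}$ does. Everything else is a straightforward assembly of Lemma~\ref{lemma:mddeg}, Theorem~\ref{th:main}, and Proposition~\ref{discr}.
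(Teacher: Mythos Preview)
Your proposal is correct and follows essentially the same approach as the paper: reduce the question to the numerical identity $\deg MD_{r,d\Delta_n}=\deg ID_{r,d\Delta_n}$ via Theorem~\ref{th:main}/Proposition~\ref{discr} and Lemma~\ref{lemma:mddeg}, then solve that identity using exactly the same estimates (the bound $(n+1)(d-1)^n-1\ge n(d-1)^n$, the inequality $\binom{n}{r}\le n^r/r!$, and $(d-1)^{n+1}>d$ for $d\ge 3$). Your handling of the ``if'' direction is slightly more explicit than the paper's---you rule out $\codim=1$ by the degree contradiction and also record the concrete symmetric-matrix description of $\mathrm{sing}(X_A^\nu)$---but this is exposition rather than a different strategy.
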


The fact that this equality holds in the case of $r=1$ and $d=2$ was shown in \cite[Theorem 8.2]{O}.  Although we include this in our proof for completeness, 
the main contribution of this result is that equality does not hold in any other case.

\begin{proof}
For any $d >1$, the configuration of lattice points in $d \Delta_n$ is non-defective~\cite{BJ} and as $r \le n$, it is enough to check that
$\deg(MD_{r,d\Delta_n})=\deg(ID_{r,d\Delta_n})$ by Propositions~\ref{prop} and~\ref{prop:wz}.

From Lemma \ref{lemma:mddeg} we know that 
$$\deg(MD_{r,d\Delta_n)})=(n+1)\binom{n}{r} d^r(d-1)^{n-r}.$$
By Proposition \ref{discr} we also know that
$$\deg(ID_{r,d\Delta_n})=(n+1) (d-1)^n(r+1)((n+1)(d-1)^n-1)^r.$$
To determine when these are equal, we will consider 
the ratio of the two degrees, both of which are nonzero for $d>1$.  We have
$$\frac{\deg(ID_{r,d\Delta_n})}{\deg(MD_{r,d\Delta_n})}=\frac{(n+1) (d-1)^n(r+1)((n+1)(d-1)^n-1)^r}{(n+1)\binom{n}{r} d^r(d-1)^{n-r}}
=\frac{(d-1)^r(r+1)((n+1)(d-1)^n-1)^r)}{\binom{n}{r} d^r}.$$
For any $d>1$ we have
$$(n+1)(d-1)^n-1=n(d-1)^n+(d-1)^n-1\geq n(d-1)^n,$$
with equality if and only if $d=2$.  Thus the numerator satisfies
$$(d-1)^r(r+1)((n+1)(d-1)^n-1)^r\geq (d-1)^r(r+1)(n(d-1)^n)^r=(d-1)^{r(n+1)}(r+1)n^r.$$
Since $\binom{n}{r}\leq \frac{n^r}{r!}$, we have
$$\frac{\deg(ID_{r,d\Delta_n})}{\deg(MD_{r,d\Delta_n})}\geq \frac{(d-1)^{r(n+1)}(r+1)n^r}{\frac{n^r}{r!}d^r}=
\frac{(d-1)^{r(n+1)}(r+1)!}{d^r}=\left(\frac{(d-1)^{n+1}}{d}\right)^r \cdot (r+1)!$$
If $d=2$, then this ratio is $\frac{(r+1)!}{d^r}=\frac{(r+1)!}{2^r}\geq 1$, with equality if and only if $r=1$.  If $d>2$, 
then $(d-1)^{n+1}\geq(d-1)^2\geq 2(d-1)=2d-2>d$.  Thus $\frac{(d-1)^{n+1}}{d}>1$, and so
$$\frac{\deg(ID_{r,d\Delta_n})}{\deg(MD_{r,d\Delta_n})}> (r+1)!.$$
Thus except possibly in the case of $d=2$ and $r=1$, we have $\deg(ID_{r,d\Delta_n})>\deg(MD_{r,d\Delta_n})$.

To see that $d=2$ and $r=1$ gives $\deg(MD_{r,d\Delta_n)})=\deg(ID_{r,d\Delta_n})$, note that in this case the ratio of the degrees is
$$\frac{(2-1)^1(1+1)((n+1)(2-1)^n-1)^1)}{\binom{n}{1} 2^1}=\frac{2(n+1-1)}{2n}=1.$$

\end{proof}

Geometrically, Proposition \ref{proposition:r1d2} shows that for $\mathbb{P}^r(1)\times\mathbb{P}^n(d)$ the associated mixed discriminant is equal to the iterated discriminant only when
$r=1$ and $d=2$. Note that we don't consider the case $d=1$ because this case is defective.

It is natural to consider the same question for any product-of-simplices :
$$\mathbb{P}^{r}(1)\times\mathbb{P}^{k_1}(d_1)\times\cdots\times \mathbb{P}^{k_\ell}(d_\ell).$$
Setting $d=(d_1,\ldots,d_\ell)$ and $k=(k_1,\ldots,k_\ell)$, let $A_{\ell,d,k}$ denote the  configuration
corresponding to  $\mathbb{P}^{k_1}(d_1)\times\cdots\times \mathbb{P}^{k_\ell}(d_\ell)$. 

We conjecture the following:

 \begin{conjecture}\label{conjecture:main}
   We have $ID_{r,A_{\ell,d,k}}= MD_{r,A_{\ell,d,k}}$ if and only if $\mathbb{P}^{r}(1)\times\mathbb{P}^{k_1}(d_1)
\times\cdots\times \mathbb{P}^{k_\ell}(d_\ell)$ is of one of the following forms:
 \begin{enumerate}
 \item $\PP^r\times\PP^m\times\PP^m, m\geq 1, r=1,2$,
 \item $(\PP^1)^4$,
 \item $\PP^1\times\PP^n(2)$.
 \end{enumerate}
 \end{conjecture}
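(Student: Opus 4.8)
The statement to prove is a conjectural classification of exactly when $ID_{r,A_{\ell,d,k}}=MD_{r,A_{\ell,d,k}}$, so the realistic goal of this section is to assemble what can be established unconditionally and identify where the conjecture genuinely remains open. The overall strategy mirrors the single-factor case in Proposition~\ref{proposition:r1d2}: since $d_i\Delta_{k_i}$-configurations are non-defective and products of non-defective configurations corresponding to Segre--Veronese varieties are non-defective (for $r\le\dim X_{A_{\ell,d,k}}$, via Proposition~\ref{prop:wz}), equality of the two irreducible-up-to-factors objects is governed by a degree comparison combined with Theorem~\ref{th:main}: by that theorem, $MD_{r,A}=ID_{r,A}$ precisely when $\mathrm{codim}_{X_A^\nu}(\mathrm{sing}(X_A^\nu))>r$, equivalently when $\deg(ID_{r,A})=\deg(MD_{r,A})$. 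So I would reduce the whole problem to computing and comparing these two degrees, using $\deg(ID_{r,A})=(r+1)\delta(\delta-1)^r$ from Proposition~\ref{discr} with $\delta=\deg(D_{A_{\ell,d,k}})$, and $\deg(MD_{r,A})$ via the Cayley configuration $\Delta_r\times A_{\ell,d,k}$ and \cite[Ch.~13, Thm.~2.4]{GKZ}.

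\textbf{Key steps, in order.} First I would record the degree formula for the Segre--Veronese discriminant $\delta=\deg D_{A_{\ell,d,k}}$; this is classical (the E-characteristic / hyperdeterminant degree, see \cite{GKZ}), and for $\ell=1$ it is $(k+1)d^k\cdot$(something) but for products it factors through the generating-function formula. Second, I would set up the generating function for $\deg(MD_{r,A_{\ell,d,k}})$: by \cite[Ch.~13, Thm.~2.4]{GKZ}, this degree is the appropriate multi-graded coefficient in the expansion of $\prod$-type rational functions built from the support polytope $\Delta_r\times d_1\Delta_{k_1}\times\cdots\times d_\ell\Delta_{k_\ell}$, generalizing the function $S(x,y)$ used in Lemma~\ref{lemma:mddeg}. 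Third, I would verify the ``if'' direction: for each of the three families --- $\PP^r\times\PP^m\times\PP^m$ with $r\in\{1,2\}$, $(\PP^1)^4$, and $\PP^1\times\PP^n(2)$ --- plug into both degree formulas and check they agree. The case $\PP^1\times\PP^n(2)$ is already Proposition~\ref{proposition:r1d2} with $r=1,d=2$; the case $(\PP^1)^4$ is the $(2,2,2,2)$ hyperdeterminant worked out in Example~\ref{ex:1111} (degree $24$ on both sides); the two $\PP^r\times\PP^m\times\PP^m$ cases should follow from \cite{WZ}, where the singular locus of the $(m+1)\times(m+1)\times\cdots$ hyperdeterminant is analyzed and shown to have codimension $>2$. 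Fourth, the ``only if'' direction: show that in every other Segre--Veronese product the ratio $\deg(ID_{r,A})/\deg(MD_{r,A})$ is strictly greater than $1$. I would split into the case where some $d_i>1$ (handled by Theorem~\ref{theorem:d_i>1}, which the excerpt says covers all $d_i>1$) and the pure-Segre case $d=(1,\ldots,1)$, where the classification of singular loci of hyperdeterminants from \cite{WZ} is directly available --- Weyman--Zelevinsky give exactly the list of formats whose hyperdeterminant has singular locus of codimension $\le 2$, which is what pins down $(\PP^1)^4$ and $\PP^r\times\PP^m\times\PP^m$ as the only survivors among Segre products.

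\textbf{Main obstacle.} The hard part is the ``only if'' direction for mixed parameter ranges not already dispatched by Theorem~\ref{theorem:d_i>1} or \cite{WZ}, and more fundamentally, closing the gap between the degree inequality and the geometric statement: a priori $\deg(ID_{r,A})>\deg(MD_{r,A})$ only tells us $ID_{r,A}$ has \emph{some} extra factor, and one must rule out the degenerate possibility that the extra factor is a power of $1$ --- but Theorem~\ref{th:main}(2)--(3) already guarantees a genuine Chow-form factor once $\mathrm{codim}_{X_A^\nu}(\mathrm{sing}(X_A^\nu))\le r$, so the real content is establishing that codimension bound, i.e. that $\mathrm{sing}(X_A^\nu)$ is ``large'' for all the excluded configurations. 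For the pure-Veronese-or-mixed cases this requires either an explicit description of $\mathrm{sing}(X_A^\nu)$ (hard in general) or --- and this is the route I expect to take --- the purely numerical argument via the two generating functions, bounding the coefficient ratio below by something $>1$ by the same elementary estimates as in Proposition~\ref{proposition:r1d2} (factorials dominating binomials, $(d-1)^{n+1}\ge 2d-2>d$, etc.), extended to the multigraded setting. The genuinely open piece --- and the reason this remains a conjecture rather than a theorem --- is that for the pure-Segre family I do not have a uniform self-contained degree bound independent of \cite{WZ}, and for general $\ell$ with some $d_i=1$ and some $d_i>1$ simultaneously, neither Theorem~\ref{theorem:d_i>1} nor the hyperdeterminant literature applies directly, so the classification there rests on the conjecture.
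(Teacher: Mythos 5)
This statement is a conjecture, and the paper offers no proof of it — only the partial evidence you cite: Proposition~\ref{proposition:r1d2} (which settles $\ell=1$, giving case (3)), Theorem~\ref{theorem:d_i>1} (the case $r=1$ with \emph{all} $d_i>1$), Example~\ref{example:111} and Example~\ref{ex:1111}, and the Weyman--Zelevinsky classification \cite{WZ} for the pure Segre case $d_1=\cdots=d_\ell=1$. Your proposal correctly recognizes this, uses the same reduction the paper uses (non-defectivity plus Theorem~\ref{th:main} turn the question into the degree comparison $\deg(ID_{r,A})$ versus $\deg(MD_{r,A})$, computed via Proposition~\ref{discr} and \cite[Ch.~13, Thm.~2.4]{GKZ}), and identifies essentially the same open territory as the paper does, so as an assessment it is accurate rather than a gap. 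One small correction: your phrase that the case ``where some $d_i>1$'' is handled by Theorem~\ref{theorem:d_i>1} overstates its scope — that theorem requires $r=1$ \emph{and} every $d_i>1$, so besides the mixed situations (some $d_i=1$, some $d_i>1$) that you flag later, the range $r\ge 2$ with $\ell\ge 2$ and all $d_i>1$ is also untouched by the paper's results and remains part of the open conjecture.
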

 
This conjecture was inspired by the question posed in   \cite[Chapter 14, pg 479]{GKZ}, which coincides with the above conjecture when $d_i=1$ for all $i$.  Their conjecture (and thus our 
 conjecture in this special case) was proved in \cite{WZ}.  Note that Proposition \ref{proposition:r1d2} implies that Conjecture \ref{conjecture:main} is true when $\ell=1$, which puts us into case (3).

 To study our conjecture in general, the following theorem giving the degree of the mixed discriminant 
$MD_{r,A_{\ell,d,k}}$ will be useful.  Let $B=B_{\ell}$ be the set of all non-empty subsets $\Omega\subset\{0,1,\ldots,\ell\}$.  For each $\Omega\in B$, let
 $$d_\Omega=\sum_{j\in\Omega}d_j.$$

  Let $\delta(\Omega)\in\mathbb{Z}^{\ell+1}_+$ be the characteristic vector of $\Omega$.  
For every $\kappa=(r,k_1,\ldots,k_\ell)\in\mathbb{Z}_+^{r+1}$, let $\mathcal{P}(\kappa)$ 
denote the set of all partitions of $\kappa$ into a sum of vectors $\delta(\Omega)$; in other words, $\mathcal{P}(\kappa)$
 is the set of all non-negative integral vectors $(m_\Omega)_{\Omega\in B}$ such that $\sum_{\Omega\in B}m_\Omega \delta(\Omega)=\kappa$.
 
 \begin{theorem}[Theorem 13.2.5, \cite{GKZ}]  The degree of $MD_{r,A_{\ell,d,k}}$ is given by
 $$\sum_{(m_\Omega)\in\mathcal{P}(\kappa)}\left(1+\sum_{\Omega\in B}
m_\Omega\right)!\prod_{\Omega\in B}\frac{(d_\Omega-1)^{m_\Omega}}{m_\Omega!}.$$
 \end{theorem}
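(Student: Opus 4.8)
The plan is to reduce $\deg(MD_{r,A_{\ell,d,k}})$ to the degree of an ordinary $A$-discriminant of a product of scaled simplices, and then to invoke \cite[Thm.~13.2.5]{GKZ}, whose statement is exactly the formula above once the notation is matched. First I would apply Propositions~\ref{prop:wz} and~\ref{prop}. Working in the range $0\le r\le\dim(X_{A_{\ell,d,k}})=k_1+\cdots+k_\ell$, in which $MD_{r,A_{\ell,d,k}}$ is a nontrivial polynomial: non-defectivity of the Segre--Veronese configuration $A_{\ell,d,k}$ together with Proposition~\ref{prop:wz} gives that the Cayley configuration $C=\Delta_r\times A_{\ell,d,k}$ is non-defective, and then Proposition~\ref{prop} yields $MD_{r,A_{\ell,d,k}}=D_C$. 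So it suffices to compute $\deg(D_C)$.

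Next I would identify $C$ combinatorially. By construction $C$ is the configuration of lattice points of $\Delta_r\times d_1\Delta_{k_1}\times\cdots\times d_\ell\Delta_{k_\ell}$, so $X_C=\mathbb{P}^{r}(1)\times\mathbb{P}^{k_1}(d_1)\times\cdots\times\mathbb{P}^{k_\ell}(d_\ell)$ is itself a Segre--Veronese variety: a product of $\ell+1$ scaled simplices in which the new first factor $\Delta_r$ carries scaling parameter $d_0:=1$. This is precisely the object whose discriminant degree \cite[Thm.~13.2.5]{GKZ} computes. Indexing the $\ell+1$ factors by $\{0,1,\ldots,\ell\}$, the set $B=B_\ell$ of nonempty subsets $\Omega\subseteq\{0,\ldots,\ell\}$, the sums $d_\Omega=\sum_{j\in\Omega}d_j$ with $d_0=1$, the format vector $\kappa=(r,k_1,\ldots,k_\ell)$, and the set $\mathcal{P}(\kappa)$ of partitions $\kappa=\sum_{\Omega\in B}m_\Omega\,\delta(\Omega)$ are exactly the data defined just before the statement, and GKZ's formula becomes
$$\deg(D_C)=\sum_{(m_\Omega)\in\mathcal{P}(\kappa)}\Bigl(1+\sum_{\Omega\in B}m_\Omega\Bigr)!\,\prod_{\Omega\in B}\frac{(d_\Omega-1)^{m_\Omega}}{m_\Omega!},$$
which is the asserted value of $\deg(MD_{r,A_{\ell,d,k}})$.

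I do not expect a real obstacle: the only content beyond quoting \cite{GKZ} is the bookkeeping in the first two steps — verifying that the Cayley construction applied to $r+1$ copies of a product-of-scaled-simplices configuration again yields such a configuration, with one extra unit-scaled factor $\Delta_r$, and checking that the hypotheses of Propositions~\ref{prop} and~\ref{prop:wz} are in force. As a consistency check one can specialize to $\ell=1$, $d_1=d$, $k_1=n$ and recover the closed form $\deg(MD_{r,d\Delta_n})=(n+1)\binom{n}{r}d^r(d-1)^{n-r}$ of Lemma~\ref{lemma:mddeg}, or equivalently read off the coefficient of $x_0^{r}x_1^{k_1}\cdots x_\ell^{k_\ell}$ in $\bigl(\prod_i(1+x_i)-\sum_j d_jx_j\prod_{i\ne j}(1+x_i)\bigr)^{-2}$, the generating-function form of \cite[Ch.~13, Thm.~2.4]{GKZ} already used in the proof of Lemma~\ref{lemma:mddeg}.
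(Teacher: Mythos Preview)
Your proposal is correct and follows exactly the route the paper intends. Note that the paper does not actually supply a proof of this statement: it is presented as a direct citation of Theorem~13.2.5 of \cite{GKZ}, with the notational setup $B=B_\ell$, $\kappa=(r,k_1,\ldots,k_\ell)$, and $d_0=1$ already encoding the extra Cayley factor $\Delta_r$. Your argument---reduce $MD_{r,A_{\ell,d,k}}$ to $D_C$ via Propositions~\ref{prop} and~\ref{prop:wz}, observe that $C=\Delta_r\times d_1\Delta_{k_1}\times\cdots\times d_\ell\Delta_{k_\ell}$ is again a product of scaled simplices, and then invoke GKZ's degree formula verbatim---is precisely the reduction the paper leaves implicit in its citation.
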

Note that any partition using the vector $\delta(\{0\})=(1,0,\ldots,0)$ will not contribute to this sum, since $d_{\{0\}}-1=0$.  Letting $C$ be the set of all 
nonempty subsets of $\{1,\ldots,\ell\}$ and letting $k=(k_1,\ldots,k_\ell)$ as before, we have that  $\deg\left(ID_{r,A_{\ell,d,k}}\right)=(r+1)\delta(\delta-1)^r,$
where
\[\delta=\sum_{(m_\Omega)\in\mathcal{P}(k)}\left(1+
\sum_{\Omega\in C}m_\Omega\right)!\prod_{\Omega\in C}\frac{(d_{\Omega}-1)^{m_\Omega}}{m_\Omega!}.\]
When it is clear from context, we will abbreviate $\deg(MD_{r,A_{\ell,d,k}})$ as $\deg(MD)$ and  $\deg(ID_{r,A_{\ell,d,k}})$ as $\deg(ID)$. 

\begin{example}\label{example:111} Let us compare the degrees of the mixed and the iterated discriminant when $r=1$, $\ell=2$, and $k_1=k_2=1$.  
To compute the degree of the mixed discriminant, we consider all partitions of $\kappa=(1,1,1)$.  We may discount  any partition with the vector $(1,0,0)$, as this 
partition would contribute a term of $0$ to $\deg(MD)$.  Thus, the only relevant partitions are
\begin{itemize}
\item $(1,1,1)$,
\item $(1,1,0)+(0,0,1)$, and
\item $(1,0,1)+(0,1,0)$.
\end{itemize}
The contributions from these terms to $\deg(MD)$ are
\begin{itemize}
\item $2!\cdot \frac{(d_1+d_2)^1}{1!}=2(d_1+d_2)$ ,
\item  $3!\cdot\frac{d_1^1\cdot (d_2-1)^1}{1!\cdot 1!}=6d_1(d_2-1)$, and
\item  $3!\cdot\frac{d_2^1\cdot (d_1-1)^1}{1!\cdot 1!}=6d_2(d_1-1)$,
\end{itemize}
respectively. (Note that some of these contributions will be zero if one or both of $d_1$ and $d_2$ are equal to $1$.) Adding these gives
$$\deg(MD)=2(d_1+d_2)+6d_1(d_2-1)+6d_2(d_1-1)=12d_1d_2-4d_1-4d_2=4(3d_1d_2-d_1-d_2).$$
To compute $\deg(ID)$, we must consider the partitions of $k=(1,1)$. There are only two:  $(1,1)$ and $(1,0)+(0,1)$.  The contributions of these to $\delta$ are
\begin{itemize}
\item $2!\cdot \frac{(d_1+d_2-1)^1}{1!}=2(d_1+d_2-1)$ and
\item  $3!\cdot \frac{(d_1-1)^1(d_2-1)^1}{1!\cdot 1!}=6(d_1-1)(d_2-1),$
\end{itemize}
respectively.  Thus $\delta=2(d_1+d_2-1)+6(d_1-1)(d_2-1)=6d_1d_2-4d_1-4d_2+4.$  It follows that
$$\deg(ID)=2\delta(\delta-1)=2(6d_1d_2-4d_1-4d_2+4)(6d_1d_2-4d_1-4d_2+3).$$
We will now argue that $\deg(ID)>\deg(MD)$, unless $d_1=d_2=1$.  First we perform the change of variables $d_1=d_1'+1$ and $d_2=d_2'+1$, to remove some of the negatives.  This gives
$$\deg(MD)=4(3d_1'd_2'+2d_1'+2d_2'+1)$$
and
$$\deg(ID)=2(6d_1'd_2' + 2d_1' + 2d_2'+ 2)(6d_1'd_2' + 2d_1' + 2d_2' + 1).$$
Now, if either $d_1$ or $d_2$ is greater than $1$, then $(6d_1'd_2' + 2d_1' + 2d_2' + 1)$ is at least $3$, meaning that
$$\deg(ID)\geq 6(6d_1'd_2' + 2d_1' + 2d_2'+ 2)=36d_1'd_2' + 12d_1' + 12d_2'+ 12.$$
This is certainly greater than
$$\deg(MD)=12d_1'd_2' + 8d_1' + 8d_2'+ 4,$$
since $d_1'$ and $d_2'$ are nonnegative.  So, in this case $\deg(ID)>\deg(MD)$.  If we do have $d_1=d_2=1$, then  $\deg(MD)=4=\deg(ID)$.  
This equality was predicted by case (1) of Conjecture \ref{conjecture:main}.
\end{example}

  We will now prove that Conjecture~\ref{conjecture:main} holds in the case that $r=1$ and $d_i>1$ for all $i$.

\begin{theorem}\label{theorem:d_i>1} Suppose $d_i>1$ for all $i$.  Then the only case where $MD_{1,A_{\ell,d,k}}=ID_{1,A_{\ell,d,k}}$ is when $\ell=1$ and $d_1=2$.
\end{theorem}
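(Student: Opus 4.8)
The plan is to reduce the statement to a degree comparison, exactly as was done in Proposition~\ref{proposition:r1d2} and Example~\ref{example:111}, and then to show that the inequality $\deg(ID_{1,A_{\ell,d,k}})>\deg(MD_{1,A_{\ell,d,k}})$ is \emph{strict} whenever $\ell\ge 2$, or whenever $\ell=1$ and $d_1\ge 3$. Since all $d_i>1$, the configuration $A_{\ell,d,k}$ is non-defective (the product of dilated simplices is non-defective, e.g. by~\cite{BJ} together with the Segre-Veronese structure), and since $r=1\le\dim(X_{A_{\ell,d,k}})$, Propositions~\ref{prop} and~\ref{prop:wz} guarantee $MD_{1,A_{\ell,d,k}}$ is an honest irreducible hypersurface. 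By Theorem~\ref{th:main} equality of polynomials is then equivalent to equality of degrees. So the whole proof is a (careful) estimate on the two degree formulas recalled right before the statement: $\deg(MD)=\sum_{(m_\Omega)\in\mathcal P(\kappa)}\bigl(1+\sum_\Omega m_\Omega\bigr)!\prod_\Omega \frac{(d_\Omega-1)^{m_\Omega}}{m_\Omega!}$ with $\kappa=(1,k_1,\dots,k_\ell)$, and $\deg(ID)=2\delta(\delta-1)$ with $\delta=\deg(D_A)=\sum_{(m_\Omega)\in\mathcal P(k)}\bigl(1+\sum_{\Omega\in C}m_\Omega\bigr)!\prod_{\Omega\in C}\frac{(d_\Omega-1)^{m_\Omega}}{m_\Omega!}$.

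First I would dispose of the base case $\ell=1$: here $A_{1,d_1,k_1}$ gives the Veronese $\PP^{k_1}(d_1)$, so $\delta=\deg(D_{d_1\Delta_{k_1}})=(k_1+1)(d_1-1)^{k_1}$ and the two degrees are precisely those computed in Proposition~\ref{proposition:r1d2} with $n=k_1$, $r=1$; that proposition already shows equality holds iff $d_1=2$. Then for $\ell\ge 2$ the strategy is to bound $\delta$ from below well enough that $2\delta(\delta-1)$ dwarfs $\deg(MD)$. The key structural observation is that $\delta$ and $\deg(MD)$ are built from nearly the same sum over partitions, but $\deg(MD)$ ranges over partitions of $\kappa=(1,k_1,\dots,k_\ell)$ (which must ``spend'' the extra coordinate $1$ on some $\Omega\ni 0$) while $\delta$ ranges over partitions of $k=(k_1,\dots,k_\ell)$ using only $\Omega\in C\subseteq\{1,\dots,\ell\}$. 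Matching a partition $(m_\Omega)\in\mathcal P(\kappa)$ to its restriction $(m_\Omega)_{\Omega\in C}\in\mathcal P(k)$ (after removing the piece carrying the $0$-coordinate, whose factor $d_{\{0\}}-1=0$ kills it unless paired, but $d_{\Omega\cup\{0\}}-1=d_\Omega$ replaces $d_\Omega-1$), one gets a term-by-term comparison showing roughly $\deg(MD)\le (\text{small poly factor})\cdot\delta$ — more precisely, each term of $\deg(MD)$ is at most a bounded multiple (independent of the $d_i$ in the relevant regime) of a corresponding term of $\delta$, so $\deg(MD)=O(\delta)$ with an explicit constant, while $\deg(ID)=2\delta^2-2\delta$ grows quadratically. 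Once $\delta$ is shown to be large enough (and for $\ell\ge 2$ with all $d_i\ge 2$ one checks $\delta\ge$ some explicit bound forcing $2\delta(\delta-1)>C\delta\ge\deg(MD)$) the conclusion follows.

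Concretely I would organize it as: (i) note the factor $d_{\{0\}}-1=0$ eliminates all $\mathcal P(\kappa)$-partitions that put a pure-$\{0\}$ block, so every contributing partition of $\kappa$ has the coordinate $1$ absorbed into some block $\Omega^*\ni 0$, and replacing $\Omega^*$ by $\Omega^*\setminus\{0\}$ sends $d_{\Omega^*}-1=d_{\Omega^*\setminus\{0\}}$ — giving a surjection from contributing $\mathcal P(\kappa)$-partitions onto $\mathcal P(k)$-partitions; (ii) along this correspondence compare the two terms: the $MD$-term gets an extra factorial factor of the form $\frac{(1+\sum m)!}{(\sum m)!}=(1+\sum m)$ and trades one $(d-1)$ for a $d$, so the ratio $(\text{MD term})/(\text{corresponding }\delta\text{ term})$ is a product of factors each $\le$ (something like) $(1+\sum m)\cdot\frac{d_{\Omega^*}}{d_{\Omega^*}-1}\le 2(1+\sum m)$, which is polynomially bounded in $\ell$ but independent of the sizes of the $d_i$; (iii) bound $\sum_\Omega m_\Omega\le \ell$ across all partitions (each block uses at least one index from $\{1,\dots,\ell\}$ after removing $0$, and the total weight is $\sum k_i$; a cleaner bound: the number of blocks is at most $\sum_i k_i$ but one can also just use the crude bound that each term ratio is $\le(\ell+2)!$); hence $\deg(MD)\le (\ell+2)!\cdot\delta$; (iv) finally show $2\delta(\delta-1)>(\ell+2)!\cdot\delta$, i.e. $2(\delta-1)>(\ell+2)!$, by lower-bounding $\delta$: with $\ell\ge 2$ and all $d_i\ge 2$, the single-block partition $\Omega=\{1,\dots,\ell\}$ already contributes $2!(d_1+\dots+d_\ell-1)\ge 2(2\ell-1)$, and pairing-up partitions contribute products $\prod(d_i-1)\ge 1$ each, so $\delta$ grows at least like $2^{\ell-1}$ (from the $2^\ell-1$ or so distinct partitions into singletons and their refinements, each contributing a positive integer $\ge$ a factorial), which beats $(\ell+2)!/2+1$ for all $\ell\ge 2$ — and the finitely many small $\ell$ (namely $\ell=2,3$) where the asymptotics are not yet decisive are checked by hand, e.g. $\ell=2$ is exactly Example~\ref{example:111}, where $\delta=6d_1d_2-4d_1-4d_2+4\ge 6\cdot4-8-8+4=12$ and $\deg(ID)=2\delta(\delta-1)\ge 2\cdot12\cdot11=264>\deg(MD)$.

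I expect step (iv) — producing a clean, fully rigorous lower bound on $\delta$ that beats the term-ratio bound from (ii)--(iii) uniformly in $\ell$ — to be the main obstacle, since the crude factorial bound $(\ell+2)!$ on the ratio is wasteful and the honest growth of $\delta$ (which is a sum of factorials times products of $(d_i-1)$'s over all partitions of $k$) needs to be matched against it with enough slack; the cleanest route is probably to \emph{not} separate into ``generic $\ell$'' versus ``small $\ell$'' but instead to get a sharper per-term bound, replacing the blunt $(1+\sum m)!$ by noticing that the \emph{same} factorial appears (shifted by one) in the $\delta$-term, so the genuine discrepancy is only a single linear factor $(1+\sum m_\Omega)\le \ell+1$ together with the trade $\frac{d_{\Omega^*}}{d_{\Omega^*}-1}\le 2$, giving $\deg(MD)\le 2(\ell+1)\,\delta$; then one only needs $2(\delta-1)>2(\ell+1)$, i.e. $\delta>\ell+2$, which is immediate since already the singleton-refinement partition $\{1\}+\dots+\{\ell\}$ contributes $(\ell+1)!\prod(d_i-1)\ge(\ell+1)!>\ell+2$ for $\ell\ge 2$. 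If that sharper bookkeeping goes through, the proof collapses to: base case from Proposition~\ref{proposition:r1d2}, the term-matching inequality $\deg(MD)\le 2(\ell+1)\deg(D_A)$, the trivial lower bound $\deg(D_A)=\delta\ge(\ell+1)!$, and arithmetic.
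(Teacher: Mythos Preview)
Your overall strategy---reduce to a degree comparison, set up the surjection from contributing partitions of $\kappa=(1,k_1,\dots,k_\ell)$ to partitions of $k=(k_1,\dots,k_\ell)$, compare term by term, and then lower-bound $\delta$---is exactly the one the paper uses. The gap is in the bookkeeping of step~(ii)--(iii), and it is not cosmetic.

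First, the factorial bookkeeping is off. With $\Psi$ the unique block containing $0$, $\Xi=\Psi\setminus\{0\}$, and $S=\sum_{\Omega}m_\Omega$, one checks that the total $S$ is the \emph{same} for the $MD$-partition and the induced $\delta$-partition, so the leading factorial $(1+S)!$ does not change; what changes is $m_\Xi!\to(m_\Xi+1)!$ in the denominator and $d_\Psi-1=d_\Xi$ replacing one power of $d_\Xi-1$. Hence the honest term ratio is
\[
\frac{\text{MD-term}}{\text{$\delta$-term}}=\frac{d_\Xi}{d_\Xi-1}\,(m_\Xi+1),
\]
not $(1+\sum m_\Omega)\cdot\frac{d_{\Omega^*}}{d_{\Omega^*}-1}$. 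More importantly, your claimed bound $(1+\sum m_\Omega)\le\ell+1$ is false: $\sum_\Omega m_\Omega$ is the number of parts and for the singleton partition equals $k_1+\cdots+k_\ell$, which is unbounded in terms of $\ell$. The correct uniform bound on $m_\Xi+1$ is $\max_i k_i$ (since $\delta(\Psi)$ already contributes once to the coordinate achieving the max), not $\ell+1$. So your inequality $\deg(MD)\le 2(\ell+1)\,\delta$ is unsupported.

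Second, you have not accounted for the fibers of the surjection. Several $MD$-partitions of $\kappa$ (one for each distinct vector present in the target partition of $k$) map to the same $\delta$-partition, so even a correct per-term ratio only gives $\deg(MD)\le 2\max_i k_i\cdot(\text{max fiber size})\cdot\delta$; the paper bounds the fiber size by $k_1+\cdots+k_\ell$. Altogether one gets $\delta\ge\deg(MD)\big/\bigl(2\max_i k_i\,(k_1+\cdots+k_\ell)\bigr)$, and the inequality to be established becomes $\delta-1>\max_i k_i\,(k_1+\cdots+k_\ell)$. Your lower bound $\delta\ge(\ell+1)!$ is useless here because the right-hand side depends on the $k_i$, not on $\ell$; one needs a lower bound on $\delta$ growing with the $k_i$'s. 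The paper gets this by exhibiting at least $\ell$ partitions of $k$ each contributing $\ge(1+\max_i k_i)\max_i k_i$ to $\delta$ (from the factor $(1+\sum m_\Omega)$ times a multinomial coefficient), which is where the real work lies. With your corrected ratio and this missing fiber/lower-bound analysis filled in, your outline becomes the paper's proof.
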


\begin{proof}
This proposition holds when $\ell=1$ by Proposition \ref{proposition:r1d2}, and when $\ell=2$ and $k_1=k_2=1$ by Example \ref{example:111}.  
Thus it suffices to prove that we have $\deg\left(MD_{1,A_{\ell,d,k}}\right)<\deg\left(ID_{1,A_{\ell,d,k}}\right)$ for $\ell=2$ with $(k_1,k_2)\neq (1,1)$, and for $\ell\geq 3$.

First we consider how partitions of $\kappa=(1,k_1,\ldots,k_\ell)$ relate to partitions of $k=(k_1,\ldots,k_\ell)$.  Each partition of $\kappa$ gives rise to a 
partition of $k$ simply by deleting the first coordinate and grouping together vectors that are now identical.  Note that no partition $(m_\Omega)$ contributing 
to $\deg(MD)$ uses the vector $(1,0,\ldots,0)$.  Also, exactly one vector in each partition of $\kappa$ is of the form $(1,*,\ldots,*)$.  Call the support of this vector 
$\Psi((m_\Omega))$, or simply $\Psi$ when the context is clear.  Note that $m_{\Psi}=1$.  Let $\Xi$ denote $\Psi\setminus\{0\}$. Isolating $\Psi$ and $\Xi$, we may write
\begin{align*}
\deg\left(MD\right)=&\sum_{(m_\Omega)\in\mathcal{P}(\kappa)}\left(1+\sum_{\Omega\in B}
m_\Omega\right)!\prod_{\Omega\in B}\frac{(d_{\Omega}-1)^{m_\Omega}}{m_\Omega!}
\\=&\sum_{(m_\Omega)\in\mathcal{P}(\kappa)}\left(1+\sum_{\Omega\in B}
m_\Omega\right)!\cdot\frac{(d_{\Psi}-1)^{m_{\Psi}}}{m_{\Psi}!}\cdot\frac{(d_{\Xi}-1)^{m_{\Xi}}}{m_{\Xi}!}\prod_{\Omega\in B, \Omega\neq \Psi,\Xi}\frac{(d_{\Omega}-1)^{m_\Omega}}{m_\Omega!}
\\=&\sum_{(m_\Omega)\in\mathcal{P}(\kappa)}\left(1+\sum_{\Omega\in B}
m_\Omega\right)!\cdot(d_{\Psi}-1)\cdot\frac{(d_{\Xi}-1)^{m_{\Xi}}}{m_{\Xi}!}\prod_{\Omega\in B,\Omega\neq \Psi,\Xi}\frac{(d_{\Omega}-1)^{m_\Omega}}{m_\Omega!}.
\end{align*}
Given $(m_\Omega)$ a partition of $\kappa$, let $(n_\Omega)$ be the corresponding partition of $k$.   So, if  the term in $\deg(MD)$ coming from $(m_\Omega)$ is 
$$\left(1+\sum_{\Omega\in B}
m_\Omega\right)!\cdot(d_{\Psi}-1)\cdot\frac{(d_{\Xi}-1)^{m_{\Xi}}}{m_{\Xi}!}\prod_{\Omega\in B, \Omega\neq \Psi,\Xi}\frac{(d_{\Omega}-1)^{m_\Omega}}{m_\Omega!},$$
then the term in $\delta$ coming from $(n_\Omega)$ is 
\[\left(1+\sum_{\Omega\in B}
m_\Omega\right)!\cdot\frac{(d_{\Xi}-1)^{m_{\Xi}+1}}{(m_{\Xi}+1)!}\prod_{\Omega\in B, \Omega\neq \Psi,\Xi}\frac{(d_{\Omega}-1)^{m_\Omega}}{m_\Omega!}.\]
Note that $d_{\Xi}=d_{\Psi}-1$.  We know that $d_{\Xi}\neq 1$ by our assumption that $d_i>1$ for all $i$, so the change in factor between these two contributions is
\[\frac{d_{\Xi}-1}{d_{\Xi}(m_{\Xi}+1)}.\]
Since $d_\Xi>1$, this is at least $\frac{1}{2(m_{\Xi}+1)}$.  In general, $m_\Omega\leq \max\{k_i\}$ for any $\Omega$;  since the vector $\delta(\Psi)$ also appears in
 the partition of $\kappa$, we in fact have $m_\Xi\leq \max\{k_i\}-1$.  So,  $m_{\Xi}+1\leq \max\{k_i\}$.  It follows that $\frac{1}{2(m_{\Xi}+1)}$ is greater than or 
 equal to $\frac{1}{2\max\{k_i\}}$.  So, passing from a partition of $\kappa$ to a partition of $k$, the corresponding term in $\delta$ is at least $\frac{1}{2\max\{k_i\}}$ times the corresponding term in $\deg(MD)$.

Now we consider  how many partitions of $\kappa$ give rise to the same partition of $k$.  Given a partition $(n_\Omega)$ of $k$, all relevant partitions of 
$\kappa$ that map to it can be constructed by choosing a single vector used in $(n_\Omega)$, and appending a $1$ to the $0^{th}$ coordinate.  Thus, the number of 
partitions of $\kappa$ mapping to $(n_\Omega)$ is equal to the number of distinct vectors used in $(n_\Omega)$.  The number of distinct vectors in this partition can 
be bounded by $k_1+\ldots+k_\ell$, since this is the total sum of all the entries of all the vectors used.  Thus, we have that
$$\delta\geq\frac{1}{2\max\{k_i\}(k_1+\cdots+k_\ell)}\deg(MD).$$
It follows that
\begin{align*}
\deg(ID)=2\delta(\delta-1)\geq& 2\frac{1}{2\max\{k_i\}(k_1+\cdots+k_\ell)}\deg(MD)\cdot(\delta-1)\\=&\frac{\delta-1}{\max\{k_i\}(k_1+\cdots+k_\ell)}\cdot \deg(MD).
\end{align*}
To show that $\deg(ID)>\deg(MD)$, it remains to show that $\delta-1>\max\{k_i\}(k_1+\cdots+k_\ell)$.

First, rewrite
\begin{align*}
\delta=&\sum_{(m_\Omega)\in\mathcal{P}(k)}\left(1+
\sum_{\Omega\in C}m_\Omega\right)!\prod_{\Omega\in C}\frac{(d_{\Omega}-1)^{m_\Omega}}{m_\Omega!}
\\=&\sum_{(m_\Omega)\in\mathcal{P}(k)}\left(1+
\sum_{\Omega\in C}m_\Omega\right)\cdot\frac{\left(\sum_{\Omega\in C}m_{\Omega}\right)!}{\prod_{\Omega\in C}m_\Omega!}\prod_{\Omega\in C}(d_{\Omega}-1)^{m_\Omega}
\\=&\sum_{(m_\Omega)\in\mathcal{P}(k)}\left(1+
\sum_{\Omega\in C}m_\Omega\right)\cdot{\sum_{\Omega\in C}m_{\Omega}\choose m_{\Omega_1},\ldots,m_{\Omega_t}}\prod_{\Omega\in C}(d_{\Omega}-1)^{m_\Omega}.
\end{align*}
For any partition of $k$, we have that $\sum_{\Omega\in C}m_{\Omega}\geq \max\{k_i\}$, since $k_i$ vectors (counted with multiplicity) must have nonzero $i^{th}$ coordinate.  
Moreover, a multinomial coefficient ${a_1+a_2+\cdots+a_s\choose a_1,a_2,\cdots,a_s}$ can be rewritten as the product ${a_1\choose a_1}{a_1+a_2\choose a_2}
\cdots{a_1+a_2+\cdots+a_s\choose a_s}$, so it is at least as large as ${a_1+a_2+\cdots+a_s\choose a_s}$.  Of course, we may reorder the $a_i$'s in any way we desire.  
So, as long as some $a_i$ satisfies $0<a_i<a_1+\cdots+a_s$, we have ${a_1+a_2+\cdots+a_s\choose a_1,a_2,\cdots,a_s}\geq{a_1+a_2+\cdots+a_s\choose a_i}
\geq {a_1+a_2+\cdots+a_s\choose 1}=(a_1+\cdots+a_s)$.  This means that if $(m_\Omega)$ is a partition of $k$ that uses at least two different vectors, we have 
${\sum_{\Omega\in C}m_{\Omega}\choose m_{\Omega_1},\ldots,m_{\Omega_t}}\geq \sum_{\Omega\in C}m_\Omega\geq\max\{k_i\}$.  Finally, the product 
$\prod_{\Omega\in C}(d_{\Omega}-1)^{m_\Omega}$ is greater than or equal to $1$.  Thus, every partition of $k$ that uses at least distinct two vectors contributes at least $(1+\max\{k_i\})\cdot\max\{k_i\} $
to $\delta$.  We will now argue that there are at least $\ell$ such partitions of $k$.  

To do this, we split into two cases:  where $\ell=2$, and where $\ell\geq 3$.  If $\ell=2$ and $(k_1,k_2)\neq (1,1)$, then there are indeed at least two such partitions 
of $k=(k_1,k_2)$.  For instance, we could use $(1,1)+(k_1-1)(1,0)+(k_2-1)(0,1)$ and $k_1(1,0)+k_2(0,1)$. Both do indeed use at least two distinct vectors since at least one of $k_1-1$ and $k_2-1$ is nonzero.

Assume now $\ell\geq 3$.  We can construct a partition of $k$ that uses at least two vectors by choosing any $0-1$ vector with support size at least $2$ and at 
most $\ell-1$, and then completing the partition by using standard basis vectors.  The condition on the support size guarantees that at least one other vector will be used, 
and that this new standard basis vector has not already been used.  There are $2^\ell-2-\ell$ such initial vectors, which is greater than or equal to $\ell$ since $\ell\geq 3$.  
Thus, at least $\ell$ partitions of $k$ contribute at least $(1+\max\{k_i\})\cdot\max\{k_i\} $ to $\delta$.

Note that $\ell\max\{k_1\}\geq k_1+\cdots+k_\ell$.  It follows that
\begin{align*}
\delta\geq& \ell (1+\max\{k_i\})\cdot\max\{k_i\}
\\\geq &\ell \max\{k_i\}\cdot\max\{k_i\}+\ell
\\>&\ell \max\{k_i\}\cdot\max\{k_i\}+1
\\\geq &\max\{k_i\}(k_1+\cdots+ k_\ell)+1
\end{align*}
Equivalently, $\delta-1>\max\{k_i\}(k_1+\cdots+ k_\ell)$.  This implies that $\deg(ID)>\deg(MD)$, as desired.
\end{proof}

\section{Curves in the plane}\label{section:plane}

In this section we will determine when the mixed and iterated discriminants associated to a planar configuration are equal.  Let $A=P\cap \mathbb{Z}^2$, 
where $P$ is a smooth lattice polygon of dimension $2$.  Let $v_A$, $p_A$, and $V_A$ denote the normalized area ${\rm area}_\Z(P)$ (that is, twice its Euclidean area), 
the lattice perimeter (that is, the number of points in $A$ on the edges of $P$), and the number of vertices of $P$, respectively.  
It is well known~\cite{GKZ} that in this smooth case the degree $\delta_A$ of $D_A$ equals
$$\delta_A=3v_A-2p_A+V_A.$$

The degree of the mixed discriminant can be computed from Corollary~3.15 in~\cite{CCDRS} as
$$\deg(MD(A,A))=2({\rm area}_\Z(2P) - {\rm area}_\Z(P) -p_A)= 2 (4 v_A - v_A -p_A) = 6v_A-2p_A.$$

We can reformulate these equations in terms of the number of interior lattice points of $P$.  Let $i_A$ denote the number of interior lattice points of $P$.  Then we know by Pick's Theorem that
$$v_A=2i_A+p_A-2,$$
which can be rewritten as $v_A-p_A=2i_A-2$.
This allows us to write
$$\deg(MD(A,A))=6v_A-2p_A= 4v_A+2(v_A-p_A)=4v_A+4i_A-4=4(v_A+i_A-1).$$
and
$$\delta_A=3v_A-2p_A+V_A=v_A+2(v_A-p_A)+V_A=v_A+4(i_A-1)+V_A$$

\begin{example}  Let $A=((0,0),(2,0),(0,2))$. Let us verify that $\deg(MD(A,A))=\deg(ID_{1,A})$, as implied by Proposition \ref{proposition:r1d2}.  We have $v_A=4$, $i_A=0$, and $V_A=3$.  This gives us
$$\deg(MD(A,A))=4(v_A+i_A-1)=4(4-0-1)=12$$
and
$$\delta_A=v_A+4(i_A-1)+V_A=4+4(0-1)+3=3.$$
This means that $\deg(ID_{1,A})= 2\delta_A(\delta_A-1)=2\cdot 3\cdot 2=12=\deg(MD(A,A))$.
\end{example}

\begin{example}  Assume $A=\text{conv}((0,0),(1,0),(0,1),(1,1))$.  Let us verify that $\deg(MD(A,A))=\deg(ID_{1,A})$, as implied by Example \ref{ex:1x1x1}.   
We have $v_A=2$, $i_A=0$, and $V_A=4$.  This gives us
$$\deg(MD(A,A))=4(v_A+i_A-1)=4(2-0-1)=4$$
and
$$\delta_A=v_A+4(i_A-1)+V_A=2+4(0-1)+4=2.$$
This means that $\deg(ID_{1,A})= 2\delta_A(\delta_A-1)=2\cdot 2\cdot 1=4=\deg(MD(A,A))$.
\end{example}

It turns out that these two examples are the only smooth polygons $P$ where the iterated and the mixed discriminants associated to the
configuration of lattice points in $P$ coincide.

\begin{theorem} \label{th:plane} The only smooth polygons $P$ with an associated discriminant without singularities in 
codimension bigger than $1$ are the known cases of the triangle $2 \Delta_2$ and the unit square.
\end{theorem}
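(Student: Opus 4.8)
The plan is to convert the statement into a comparison of degrees and then into an elementary Diophantine analysis. Let $P\neq\Delta_2$ be a smooth lattice polygon of dimension $2$. Then $\delta_A=3v_A-2p_A+V_A>0$, so $A$ is non-defective; since $r=1\le\dim(X_A)=2$, Proposition~\ref{prop:wz} gives that $C=\Delta_1\times A$ is non-defective and Proposition~\ref{prop} identifies $MD_{1,A}$ with $D_C$. The singular locus of the irreducible hypersurface $X_A^\nu$ has codimension at least $1$, so Theorem~\ref{th:main} with $r=1$ yields a dichotomy: if ${\rm codim}_{X_A^\nu}({\rm sing}(X_A^\nu))>1$ then $ID_{1,A}=MD_{1,A}$, while if ${\rm codim}_{X_A^\nu}({\rm sing}(X_A^\nu))=1$ then $ID_{1,A}=MD_{1,A}\prod_k Ch_{Y_k}^{\mu_k}$ has strictly larger degree, since each $Ch_{Y_k}$ has positive degree. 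By Proposition~\ref{discr} we also have $\deg(ID_{1,A})=2\delta_A(\delta_A-1)$. Hence ``$D_A$ has no codimension-one singularities'' is equivalent to $MD_{1,A}=ID_{1,A}$, which in turn is equivalent to $\deg(MD(A,A))=2\delta_A(\delta_A-1)$. The excluded polygon $\Delta_2$ is defective and has $D_A$ constant, so it is not a genuine exception.

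Next I would make this numerical condition combinatorial. From the degree formulas recorded before the statement (Corollary~3.15 in~\cite{CCDRS} and Pick's theorem) one has $\deg(MD(A,A))=6v_A-2p_A=4(v_A+i_A-1)$ and $\delta_A=v_A+4(i_A-1)+V_A$, hence $v_A=\delta_A+4-4i_A-V_A$. Substituting into $4(v_A+i_A-1)=2\delta_A(\delta_A-1)$ and simplifying, the equality $\deg(MD(A,A))=\deg(ID_{1,A})$ becomes
\begin{equation}\label{eq:starplane}
\delta_A(\delta_A-3)=6-2V_A-6i_A .
\end{equation}
It is also convenient to record, again via Pick, the identity $\delta_A=p_A+V_A+6i_A-6$.

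I would then finish by a finite case check driven by \eqref{eq:starplane}. Since $V_A\ge3$ and $i_A\ge0$, the right-hand side of \eqref{eq:starplane} is $\le 0$, so $\delta_A(\delta_A-3)\le0$ forces $\delta_A\in\{0,1,2,3\}$. The value $\delta_A=0$ occurs only for $P=\Delta_2$, excluded. If $\delta_A\in\{1,2\}$, then \eqref{eq:starplane} reads $-2=6-2V_A-6i_A$, i.e. $V_A+3i_A=4$, forcing $i_A=0$ and $V_A=4$; then $\delta_A=p_A+V_A-6=p_A-2$, so $\delta_A=1$ gives $p_A=3<4=V_A$, impossible since $V_A\le p_A$, while $\delta_A=2$ gives $p_A=4=V_A$ and $v_A=2$, so $P$ is a smooth quadrilateral of area one all of whose edges are primitive, hence unimodularly equivalent to the unit square (normalize one vertex to the origin with edge directions $e_1,e_2$; area and convexity then force the opposite vertex to be $(1,1)$). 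If $\delta_A=3$, then \eqref{eq:starplane} gives $V_A+3i_A=3$, so $i_A=0$ and $V_A=3$; then $\delta_A=p_A-3=3$ gives $p_A=6$ and $v_A=4$, so $P$ is a smooth lattice triangle of normalized area $4$. Since every smooth lattice triangle is unimodularly equivalent to $d\Delta_2$ for some $d\ge1$ — normalize one vertex to the origin with primitive edge directions $e_1,e_2$, so the other two vertices are $(a,0)$ and $(0,b)$, and smoothness there forces $a=b$ — and $v_A=d^2=4$ gives $d=2$, we conclude $P=2\Delta_2$. This exhausts all cases, so the unit square and $2\Delta_2$ are the only smooth polygons with the asserted property.

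The conceptual crux is the reduction in the first paragraph: it relies on knowing that $\Delta_2$ is the only defective smooth polygon of dimension $2$ (so that Theorem~\ref{th:main} and the degree formula of Proposition~\ref{discr} apply in every other case), which follows once $\delta_A>0$ for $P\neq\Delta_2$. After that, the only inputs are the two standard classification facts used above — smooth lattice triangles are the $d\Delta_2$, and a smooth lattice polygon of normalized area $2$ is the unit square — together with the bookkeeping around \eqref{eq:starplane}; no step presents a serious obstacle.
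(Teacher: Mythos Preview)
Your proof is correct and takes a genuinely different combinatorial route from the paper's. Both proofs begin by reducing the geometric condition on ${\rm codim}_{X_A^\nu}({\rm sing}(X_A^\nu))$ to the numerical equality $\deg(MD(A,A))=2\delta_A(\delta_A-1)$ via Theorem~\ref{th:main} and Proposition~\ref{discr}; you make this step explicit, while the paper leaves it implicit from the surrounding discussion. The divergence is in how the resulting Diophantine equation is solved. The paper works with the equation $2(v_A-1)=(v_A+V_A-4)(v_A+V_A-5)$: it first argues by a size comparison that $i_A>0$ is impossible, then invokes the Koelman--Castryck classification of lattice polygons with no interior lattice points to reduce to $V_A\in\{3,4\}$ and finish by inspection. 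You instead substitute $v_A=\delta_A+4-4i_A-V_A$ to obtain $\delta_A(\delta_A-3)=6-2V_A-6i_A$, observe that the right-hand side is nonpositive, and deduce $\delta_A\in\{1,2,3\}$ directly; the remaining cases are then handled using only two elementary facts about smooth polygons (smooth lattice triangles are unimodularly equivalent to $d\Delta_2$, and the unique smooth primitive-edged quadrilateral of normalized area $2$ is the unit square). Your approach is more self-contained, since it avoids the external classification result, and the bound on $\delta_A$ is a pleasant shortcut; the paper's approach has the virtue of being more transparent about the geometry of the polygons involved. The one point you flag but do not fully justify---that $\delta_A>0$ for every smooth $2$-dimensional $P\neq\Delta_2$---follows immediately from $\delta_A=p_A+V_A+6i_A-6$ together with your own classification of smooth triangles, so it causes no difficulty.
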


\begin{proof}  Assume $P$ is such a polygon, $A=P\cap \mathbb{Z}^2$  and $\delta_A = \deg(D_A)$.
Using our formulas for $MD(A,A)$ and $\delta_A$, we have that
$$4(v_A+i_A-1)=2(v_A+4(i_A-1)+V_A)(v_A+4(i_A-1)+V_A-1),$$
which is equivalent to
$$2(v_A+i_A-1)=(v_A+4(i_A-1)+V_A)(v_A+4(i_A-1)+V_A-1) .$$
 Suppose for the sake of contradiction that $i_A>0$. Then $v_A+i_A-1\leq v_A+4(i_A-1)<v_A+4(i_A-1)+V_A$.  Now, if $a,b,c,d$ are positive real numbers with $ab=cd$,
  then $b<c$ implies $a>d$. This means that $2>v_A+4(i_A-1)+V_A-1\geq v_A+V_A-1\geq v_A+2$.  In other words, $v_A<0$, a contradiction.  Thus we know that $i_A=0$.
 
 Setting $i_A=0$ reduces our equation to
 $$2(v_A-1)=(v_A+V_A-4)(v_A+V_A-5).$$
 By a classification result due to~\cite{Koelman} and presented again in~\cite{Castryck}, all convex lattice polygons with no interior lattice points are equivalent to 
 either the triangle $2 \Delta_2 =\text{conv}((0,0),(2,0),(0,2))$, or to a polygon of the form $\text{conv}((0,0),(0,1),(a,0)),(b,1)$, where $a\geq b\geq 0$ and $a\geq 1$. 
 These polygons are illustrated in Figure \ref{figure:polygons}.   All of these polygons have either three or four vertices.  
 So, we must have $V_A=3$ or $V_A=4$.

 \begin{figure}[hbt]
 \includegraphics{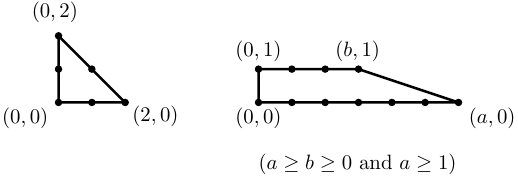}
 \caption{All lattice polygons with no interior lattice points}
 \label{figure:polygons}
 \end{figure}

If $V_A=3$, our equation becomes
 $$2(v_A-1)=(v_A-1)(v_A-2).$$
This means that either $v_A=1$, or $2=v_A-2$; that is, $v_A=1$ or $v_A=4$.  If $v_A=1$, the only possibility for $P$ is the 
primitive lattice triangle of normalized $1$; but this gives a degenerate system, and so is removed from our consideration.  
If $v_A=4$, the only possibilities of $P$ are $\text{conv}((0,0),(2,0),(0,2))$ and $\text{conv}((0,0),(4,0),(0,1))$.  The second polygon is not smooth, so the only possible triangle is $\text{conv}((0,0),(2,0),(0,2))$.

If $V_A=4$, our equation becomes
 $$2(v_A-1)=v_A(v_A-1).$$
This means that either $v_A=1$ (which is impossible impossible with $V_A=4$), or that $v_A=2$.  The only polygon with $4$ vertices and area $2$ is the square $\text{conv}((0,0),(1,0),(0,1),(1,1))$

Thus we have shown that $\text{conv}((0,0),(2,0),(0,2))$ and $\text{conv}((0,0),(1,0),(0,1),(1,1))$ are the only possibilities for $P$.  
Having already verified that $\deg(MD(A,A))=2\delta_A(\delta_A-1)$ for both these polygons, this completes the proof.
\end{proof}

\newcommand{\etalchar}[1]{$^{#1}$}

\end{document}